\documentclass[12pt]{article}
\usepackage[english]{babel}
\usepackage{epsfig}
\usepackage{amsfonts}
\usepackage{amssymb}
\usepackage{amsmath}
\usepackage{amsthm}
\usepackage{latexsym}
\usepackage{graphicx}
\usepackage{bm}
\usepackage{tabularx}
\usepackage{booktabs} 
\usepackage{enumerate}
\usepackage{caption}
\usepackage{wrapfig}  
\usepackage{geometry}
\usepackage{mathtools}  
\usepackage{eqnarray}
\usepackage{enumitem}

\usepackage{graphicx}
\usepackage{subcaption}

\usepackage[title]{appendix}

\usepackage[numbers]{natbib}
\usepackage{comment}

\usepackage{bbm} 
\usepackage{url}
\usepackage{hyperref}
\usepackage{color}
\usepackage{float}
\DeclareMathOperator{\sign}{sgn}

\usepackage{hyperref}
\hypersetup{colorlinks=true, citecolor=blue, linkcolor=red, urlcolor=green}
 
\usepackage{bbm}
\usepackage{url}
\usepackage{color}
\usepackage{xcolor}
\usepackage[section]{placeins}

\graphicspath{{New_Graphs/}}

\theoremstyle{plain}
\newtheorem{theorem}{Theorem}[section]
\newtheorem{lemma}[theorem]{Lemma}

\newtheorem{proposition}[theorem]{Proposition}

\theoremstyle{definition}

\theoremstyle{remark}

\newcommand{\shortdot}[1]{\raisebox{-0.4pt}{$\stackrel{\bullet}{#1}$}}

\title{The Amplitude Dynamics of Impulsive Queues}
\author{ Ruici Gao \\
School of Operations Research and Information Engineering
\\
Cornell University
\\
{rg585@cornell.edu} \\ \\
Jamol Pender \footnote{Corresponding Author}
\\
School of Operations Research and Information Engineering
\\
Cornell University
\\
{jjp274@cornell.edu} 
}

\usepackage{natbib}
\usepackage{graphicx}
\usepackage{amsmath}
\date{}

\begin{document}

\maketitle

\begin{abstract}
In this paper, we analyze a multiserver Markovian queue with customer abandonment i.e. the Erlang-A queue uder a novel framework, i.e. the random impulsive differential equations (RIDEs). This framework captures systems that evolve continuously while experiencing sudden, discrete interventions. The combination of such framework with Erlang-A queue give rise to multiple real-life applications, such as improving efficiency at call centers and designing optimal timing to apply quantum error correction in trying to shun away from decoherence in quantum computing. We derive closed-form expressions for steady-state amplitude bounds and average queue lengths under impulse, and we identify the impulse timings that optimize system performance.
\end{abstract}

\section{Introduction}

The $M_{\lambda}/M_{\mu}/c+M_{\theta}$ queue or the Erlang-A queue is now a classic model in queueing theory. This queueing model has been important for modeling complex service systems where customers can abandon the system if they are impatient. Despite the fact that many papers have been written on this queueing model there is no paper that computes the steady state distribution for this queueing model in closed form, the distribution of the waiting time in closed form, or the conditional probability of abandonment. 

One interesting application of the Erlang-A queue is in call centers.  In this setting, customers call the call center to speak with agents about any problems they are having.  However, in some call centers, while the customers are waiting, they here music or are reminded of their wait by an automated message.  

Customer abandonment frequently occurs in call center queues, often exacerbated by automated messages that highlight prolonged wait times. While intended to reassure callers, these messages can inadvertently magnify their frustration. Explicitly stating the current wait duration or implying a slow-moving queue can amplify feelings of impatience and dissatisfaction. This is particularly detrimental when the message lacks crucial information such as an estimated resolution time or alternative service options like a callback feature. Faced with mounting frustration and uncertainty, customers may choose to disconnect, perceiving the wait as an inefficient use of their time, especially when competing priorities or alternative solutions are available. This phenomenon underscores the critical need for carefully crafted communication that maintains caller engagement while mitigating frustration.

The need for call center models to incorporate abandonment behavior is crucial. By accurately predicting and understanding the factors that drive customer abandonment, call centers can:
\begin{itemize}
    \item Optimize resource allocation: Adjust staffing levels and call routing strategies in real-time to minimize wait times and reduce the likelihood of abandonment.
    \item Proactively address customer concerns: Implement proactive measures such as offering callbacks or alternative service channels to prevent frustration and maintain customer satisfaction.
    \item Improve service quality: Analyze abandonment data to identify root causes of long wait times and implement process improvements to enhance overall service efficiency.
    \item Enhance customer experience: Tailor communication strategies to better address customer concerns and provide a more positive and efficient customer journey.
\end{itemize}
By acknowledging and addressing customer abandonment behavior, call centers can significantly improve their operational efficiency, enhance customer satisfaction, and ultimately strengthen customer relationships.

Another intriguing application lies in the field of quantum error correction in quantum computing. Quantum error correction (QEC) is a mechanism that protects fragile quantum information against decoherence by performing periodic checks to detect and correct errors before they accumulate catastrophically. This theory is proposed and laid foundation by the works of \citet{shor1995} and \cite{steane1996} and refined by multiple works afterwards (\citet{terhal2015,Siegel2023adaptivesurfacecode, kelly2015state}). 

A key practical challenge in QEC is balancing the trade-off between correction frequency and resource overhead. Frequent corrections reduce the chance of multiple errors accumulating into uncorrectable logical failures, thereby lowering the steady-state logical error rate. However, each correction cycle consumes time and computational resources, introduces additional measurement and gate operations, and may disrupt other parts of the computation as discussed in \cite{Gidney2019efficientmagicstate}. Another aspect of QEC to be noticed is that, most architectures of QEC, including the leading practical architecture surface codes, proposed by \cite{fowler2012}, inherently imposes the QEC mechanism to the system periodically. This prompts us to investigate how to optimally time these impulsive interventions. This paper novelly models the build-up and removal of errors as a random impulsive differential equation with underlying erlang-a queues as the nature of Erlang-A queues suits the scenario perfectly. Error buildups can be modeled as arrivals, gate operations as servers, and QEC resetting logical error state serving as the impulse applied. Moreover, in quantum systems, when a qubit spends too much time in the system and accumulated errors faster then the code can detect and fix them, they become irreversible logical errors. This is captured by the Erlang-A system as abandonment.

By analyzing the impact the impulses have on the system and the steady state behavior of the system, we are able to optimize the timing of impulse application under the periodic error correction scheme, and reduce average error in the system without introducing extra resource overhead.

This paper presents a comparative analysis of two prominent impulsive differential equation models frequently employed in queueing theory. The first model, the infinite server model, serves as a foundational framework. Characterized by a linear impulsive differential equation, it provides a simplified representation of queue dynamics, enabling the establishment of baseline behaviors. In contrast, the second model, the Erlang-A model, incorporates greater complexity. This model, described by a piecewise linear impulsive differential equation with two distinct regimes, reflects the intricacies of real-world scenarios. It accounts for a finite number of servers and the possibility of customer abandonment, acknowledging the limitations of resources and the impact of service delays on customer patience.

The primary objective of this work is to characterize the steady-state behavior of both models, with a specific focus on the oscillatory patterns exhibited by queue length processes. We conduct a comprehensive investigation into the amplitude of these oscillations across various operational regimes, providing insights into the maximum and minimum queue lengths encountered in steady state. This analysis is crucial for understanding system dynamics within the fixed interval between impulses.

Our analysis reveals that the intra-interval dynamics exhibit a striking resemblance to the charging and discharging behavior of a capacitor in an electrical circuit, offering a novel perspective on the underlying mechanisms governing the queueing system. Additionally, we compute the average queue length over an interval, providing valuable insights into the overall system performance and its implications for resource utilization and customer satisfaction. By combining these findings, we aim to enhance the understanding of impulsive differential equations in modeling queueing systems and offer practical insights for optimizing performance in various applications.


 \subsection{Main Contributions of the Paper}

\noindent The main contributions of this work can be summarized as follows:
\begin{itemize}
    \item We derive closed-form expressions for the steady-state lower and upper bounds of queue lengths governed by impulsive dynamics, revealing the conditions under which oscillatory behavior stabilizes.
    \item We provide analytical solutions for the average queue length over a single impulse interval, offering insights into system performance between discrete interventions.
    \item We formulate and solve an inverse problem that identifies optimal impulse times under both linear and Erlang-A regimes, giving the solution for both the maximizer and the minimizer.
\end{itemize}


\subsection{Organization of the Paper}

The remainder of this paper is structured as follows. 
\begin{enumerate}
    \item In Section 2 we introduce the infinite-server queue under impulsive control and derives explicit expressions for its amplitude and average dynamics.
    \item Section 3 models the Erlang-A queueing model with impulses. Depending on different system load, multiple structural theorems are established.
    \item Section 4 formulates and solves the inverse problem of impulse design. Specifically, we solve the optimal time to apply a single impulse given a fixed time horizon. This section considers both the linear and Erlang-A settings, with a detailed regime-based analysis for the latter.
\end{enumerate}

\section{Infinite Server Queue}

In this section, we study the infinite server queue with the intention of understanding how much time do adjacent customers spend in the system together.  A similar type of analysis has been completed \citet{kang2021queueing, palomooverlap}.  Before we get into studying the overlap times, we review some concepts and results for the $M_t/G/\infty$ queueing model as this will be very helpful in the analysis that follows.  

\subsection{Review of the \texorpdfstring{$M_t/G/\infty$}{Mt/G/infinity} Queue}

In this section, we review the $M_t/G/\infty$ queueing model. The $M_t/G/\infty$ queue $Q(t)$ has a Poisson distribution with time varying mean $q(t)$.  As observed in \citet{eick1993mt, eick1993physics}, $q(t)$ has the following integral representation
\begin{eqnarray}
q(t) &=& \mathbb{E}[Q(t) ] = \int^{t}_{-\infty} \overline{G}(t-u) \lambda(u) du 
\end{eqnarray}
where $\lambda(u)$ is the time varying arrival rate and $S$ represents a service time with distribution $G$, $\overline{G} = 1 - G(t) = 
\mathbb{P}( S > t)$.

We find it also useful to compute the mean of the $M/M/\infty$ queue using the solution of a linear differential equation.  

\begin{lemma} \label{ode_soln}
Let $q(t)$ be the solution to the following differential equation
\begin{equation}
\shortdot{q} = \lambda - \mu q(t) 
\end{equation}
where $q(0) = q_0$.  Then, the solution for any value of $t$ is given by 
\begin{equation}
q(t) = q_0 e^{-\mu t} + \frac{\lambda}{\mu} \left( 1 - e^{-\mu t} \right) = \frac{\lambda}{\mu} + \left( q_0 - \frac{\lambda}{\mu} \right) e^{-\mu t} 
\end{equation}
and as $t \to \infty$ we have
\begin{equation}
\lim_{t \to \infty} q(t) = \frac{\lambda}{\mu} .
\end{equation}
\begin{proof}
This follows from standard results on ordinary differential equations  (see \citet{tenenbaum1985ordinary}).  
\end{proof}
\end{lemma}

\begin{lemma} \label{ode_soln}
Let $q(t)$ be the solution to the following differential equation
\begin{equation}
\shortdot{q} = \lambda - \mu q(t) 
\end{equation}
where $q(0) = q_0$.  Then, the solution for any value of $t$ is given by 
\begin{equation}
q(t) = q_0 e^{-\mu t} + \frac{\lambda}{\mu} \left( 1 - e^{-\mu t} \right) = \frac{\lambda}{\mu} + \left( q_0 - \frac{\lambda}{\mu} \right) e^{-\mu t} 
\end{equation}
and as $t \to \infty$ we have
\begin{equation}
\lim_{t \to \infty} q(t) = \frac{\lambda}{\mu} .
\end{equation}
\begin{proof}
This follows from standard results on ordinary differential equations  (see \citet{tenenbaum1985ordinary}).  
\end{proof}
\end{lemma}

Now consider the impulsive differential equation:
Consider the impulsive differential equation:
\begin{eqnarray} \label{lineq1}
    \shortdot{q}(t) &=& \lambda - \mu q(t), \quad t \neq t_k, \\ \nonumber
    \Delta q|_{t = t_k} &=& q(t_k^+) - q(t_k^-) = b q(t_k^-), \quad t_k = k\delta, \quad k \in \mathbb{N},
\end{eqnarray}
where $\lambda$ is the arrival rate, $\mu$ is the service rate, $b$ is the proportional jump magnitude, $\delta = t_k - t_{k-1}$ is the time interval between impulses.

With the linear impulsive differential equation now established, we present a key result that characterizes the limiting amplitude of the system in terms of its model parameters. This result provides crucial insights into the long-term behavior of the equation under the specified conditions. Throughout this section, we assume the stability criterion $(1+b)e^{-\mu \delta} < 1$, which ensures exponential stability of the system. This assumption plays a central role in constraining the dynamics and guaranteeing that the oscillations in the solution decay over time, leading to a stable steady-state amplitude. 

\begin{theorem} \label{upperlowerinf}
    Let $L,U$ be the lower and upper values of the linear impulsive differential equation in steady state and let $-1< b< 0 $ and $(1+b)e^{-\mu \delta} < 1$, then 
    \begin{eqnarray}
     L &=&  \frac{\lambda (1+b)}{\mu} \left( \frac{1 - e^{-\mu \delta}}{1 - (1+b)e^{-\mu \delta}} \right) \\
    U &=& \frac{\lambda}{\mu} \left( \frac{1 - e^{-\mu \delta}}{1 - (1+b)e^{-\mu \delta}} \right).
    \end{eqnarray}
    Moreover, if $ 0 \leq b < e^{\mu \delta} -1 $, then L and U are switched.  
\begin{proof}
    In the steady state, the time right before the impulse reaches a maximum we denote as $U$ i.e. $U = q(t_k^-)$ for large enough $k$.  The time right after the impulse we denote as $L$ i.e. $L =q(t_k^+)$ for large enough $k$.  From our definition of the impulsive differential equation, we have 
    \begin{equation}
        \Delta q|_{t = t_k} = q(t_k^+) - q(t_k^-) = b q(t_k^-),
    \end{equation}
    which implies that 
    \begin{equation}
       q(t_k^+) = (1+b) q(t_k^-).
    \end{equation}
    Thus by substituting $L =q(t_k^+)$ and $U = q(t_k^-)$ , we have the following equations 
    \begin{eqnarray} \label{Leqn}
        L &=& (1+b) U .
    \end{eqnarray}
    Moreover, from Lemma \ref{ode_soln} we have
    \begin{eqnarray} \label{Ueqn}
        U &=& \frac{\lambda}{\mu} + \left( L - \frac{\lambda}{\mu} \right) e^{-\mu \delta} .
    \end{eqnarray}
    Using these two equations, we have
        \begin{eqnarray}
        U &=& \frac{\lambda}{\mu} + \left( L - \frac{\lambda}{\mu} \right) e^{-\mu \delta} \\
        &=& \frac{\lambda}{\mu} + \left( (1+b) U  - \frac{\lambda}{\mu} \right) e^{-\mu \delta} \\
        &=& \frac{\lambda}{\mu} \left( \frac{1 - e^{-\mu \delta}}{1 - (1+b)e^{-\mu \delta}} \right).
    \end{eqnarray}
    Since we have proved the result for $U$, it follows from Equation \ref{Leqn} that the result is also proved for $L$.  For the case when $ 0 < b < e^{\mu \delta} -1 $, the results are switched since the impulse causes the system to increase and not decrease, thus $U =q(t_k^+)$ and $L = q(t_k^-)$. This completes the proof. 
\end{proof}
\end{theorem}

Figure \ref{Figure_1} illustrates the behavior of the impulsive differential equation across four distinct regimes. These regimes are categorized by the direction of the impulse (increasing or decreasing) and the initial condition relative to the steady state (below or above). Regime 1 represents a decreasing impulse with an initial condition below the steady state. Regime 2 depicts a decreasing impulse with an initial condition above the steady state. Regime 3 showcases an increasing impulse with an initial condition below the steady state. Finally, Regime 4 illustrates an increasing impulse with an initial condition above the steady state. We observe that in all regimes considered, Theorem \ref{upperlowerinf} thoroughly encapsulates the amplitude dynamics of the linear impulsive differential equation, providing an exceptionally accurate and detailed representation of its dynamic and oscillatory behavior under diverse parameter settings.

 \begin{figure}[!htbp]
\centering
\subfloat[]{\includegraphics[scale=.35]{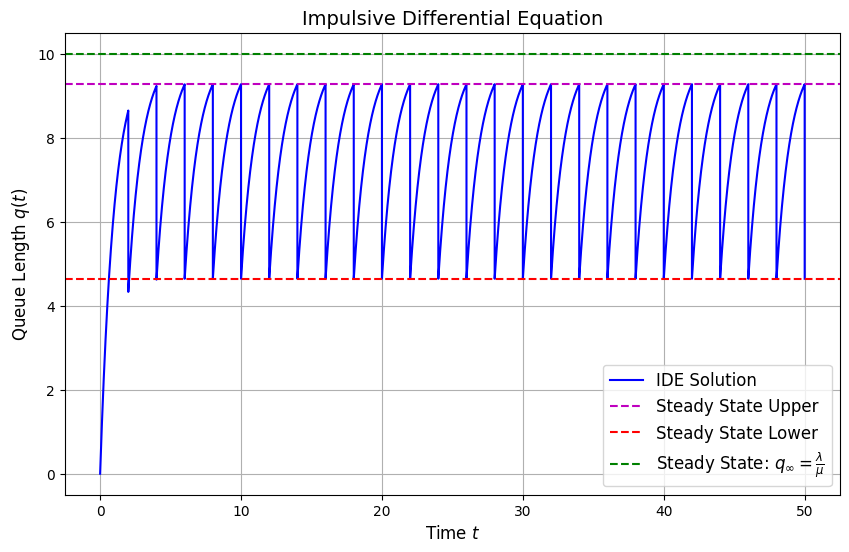}}
\subfloat[]{\includegraphics[scale=.35]{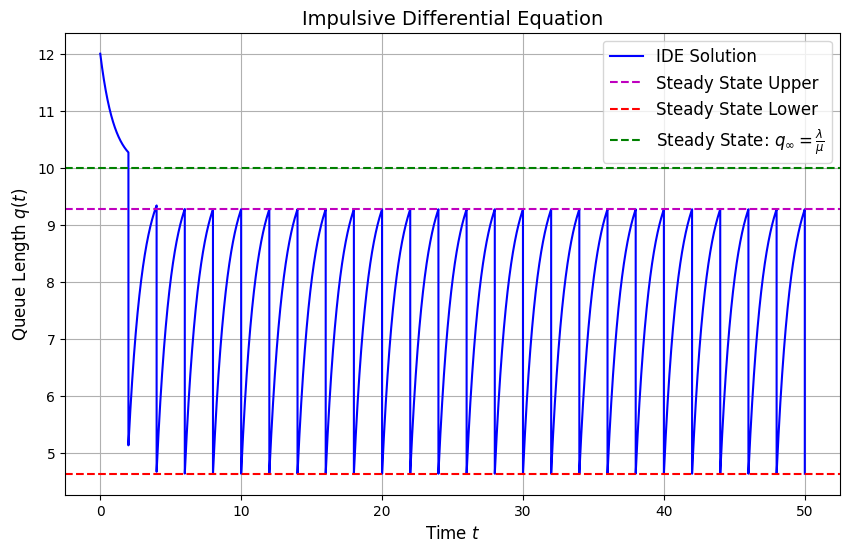}}
\\
\subfloat[]{\includegraphics[scale=.35]{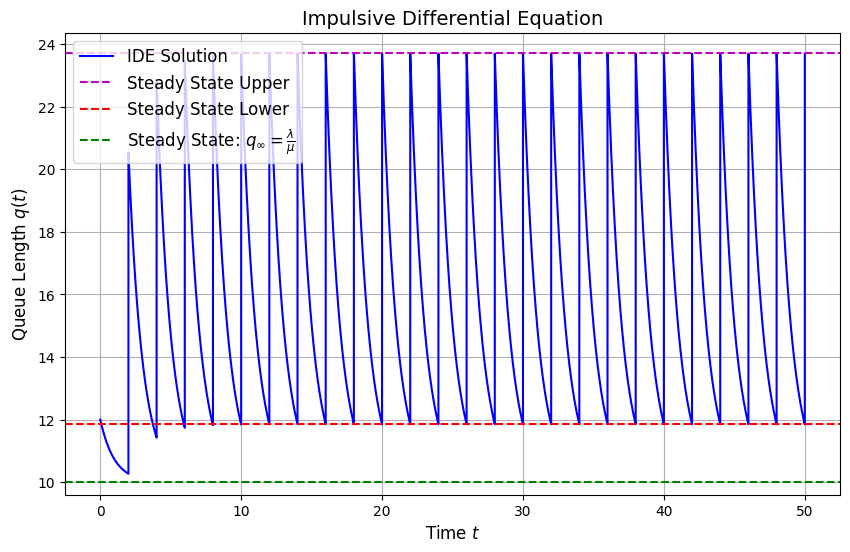}}
\subfloat[]{\includegraphics[scale=.35]{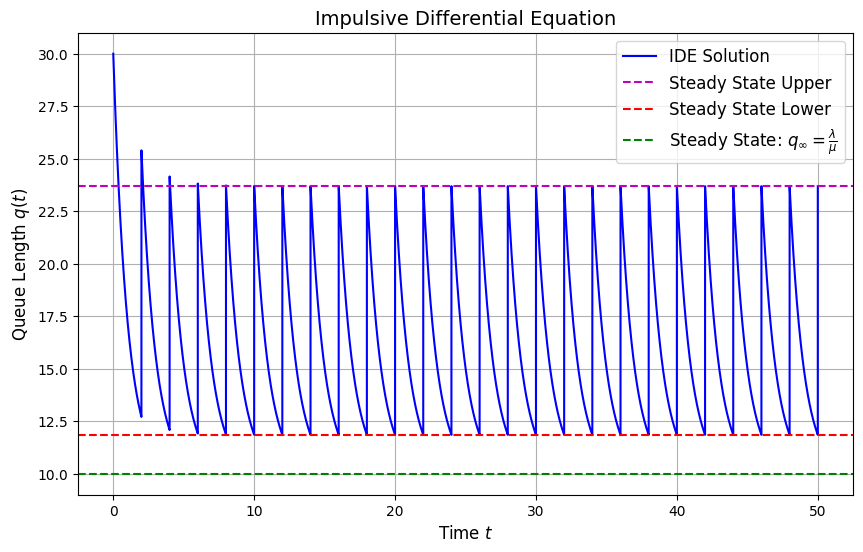}}
 \caption{Linear Impulsive Differential Equation Plots. \textbf{(a)} \ $\lambda = 10, \mu = 1, b = -1/2, q_0 = 0, \delta = 2$. \textbf{(b)}   \ $\lambda = 10, \mu = 1, b = -1/2, q_0 = 12, \delta = 2$. \textbf{(c)}  \ $\lambda = 10, \mu = 1, b = 1, q_0 = 12, \delta = 2$ . \textbf{(d)}  \ $\lambda = 10, \mu = 1, b = 1, q_0 = 30, \delta = 2$. }
\label{Figure_1}
\end{figure}

Note that the amplitude of the oscillatory dynamics is equal to 
\begin{eqnarray}
    \text{Amplitude} &=& U-L = (-1)^{(\sign(b) -1)} b \frac{\lambda}{\mu} \left( \frac{1 - e^{-\mu \delta}}{1 - (1+b)e^{-\mu \delta}} \right)
\end{eqnarray}Now that we have described the amplitude dynamics, we also are interested in the average queue length during an interval.  We know that it is not simply the average of $L$ and $U$.  We need to integrate the queue length over this interval of time $\delta$.

\begin{theorem}
The average queue length, $\mathcal{A} $, during a $\delta$ time interval in steady state is equal to 
\begin{eqnarray}
    \mathcal{A} = \frac{\lambda}{\mu} + \left( \frac{\lambda (1+b)}{\mu^2 \delta} \left( \frac{1 - e^{-\mu \delta}}{1 - (1+b)e^{-\mu \delta}} \right) - \frac{\lambda}{\mu^2 \delta}\right).
\end{eqnarray}
\begin{proof}
We leverage that the average is just the integral of queue length during one interval of size $\delta$.
    \begin{eqnarray}
        \mathcal{A}  &=& \frac{1}{\delta} \int^{\delta}_0 q(s) ds \\
        &=& \frac{1}{\delta} \int^{\delta}_0 \left( \left( q(0) - \frac{\lambda}{\mu} \right) e^{-\mu s} + \frac{\lambda}{\mu} \right) ds \\
        &=& \frac{1}{\delta} \int^{\delta}_0 \left( \left( L - \frac{\lambda}{\mu} \right) e^{-\mu s} + \frac{\lambda}{\mu} \right) ds \\
        &=& \frac{1}{\mu \delta} \left( L - \frac{\lambda}{\mu} \right) \left( 1 - e^{-\mu \delta} \right) + \frac{\lambda}{\mu}   \\
        &=& \frac{\lambda}{\mu} + \left( \frac{\lambda (1+b)}{\mu^2 \delta} \left( \frac{1 - e^{-\mu \delta}}{1 - (1+b)e^{-\mu \delta}} \right) - \frac{\lambda}{\mu^2 \delta}\right).
    \end{eqnarray}
\end{proof}
\end{theorem}

\section{The Erlang-A Queueing Model} \label{secQMod}

The Erlang-A queue, also known as the $M_{\lambda}/M_{\mu}/c+M_{\theta}$ queue is a stochastic model describing a queueing system with Poisson arrivals, exponential service times, and customer abandonment. It is defined by the following characteristics:

\begin{itemize}
    \item \textbf{Arrival Process}: Customers arrive according to a Poisson process with rate \( \lambda \), so the interarrival times are exponentially distributed with mean $1/\lambda$.
    \item \textbf{Service Process}: The system has \( s \) servers, and the service times follow an exponential distribution with mean \( 1/\mu \) (rate \( \mu \)).
    \item \textbf{Abandonment Process}: Customers waiting in the queue have a patience time that is exponentially distributed with mean \( 1/\theta \) (rate \( \theta \)). If a customer's waiting time exceeds their patience time, they abandon the queue.
\end{itemize}

The Erlang-A queueing model and its variants are extensively studied in the queueing literature, see for example \citet{zeltyn2005call, whitt2006sensitivity, mandelbaum2007service, gurvich2014excursion,  engblom2014approximations,  pender2014gram, niyirora2016optimal, aktekin2016stochastic, braverman2017stein, pender2017sampling, pender2017approximating, bitton2019joint, azriel2019erlang, van2019economies}. The Erlang-A queue occupies a prominent position within queueing theory due to its encompassing nature. Three foundational queueing models, namely the M/M/$\infty$ , M/M/c/c, and M/M/c/$\infty$  queues, can be derived as special cases of the Erlang-A model.  

The M/M/$\infty$ queue, characterized by an infinite server capacity, can be obtained from the Erlang-A model through two distinct approaches. Firstly, by setting the number of servers to infinity ($c = \infty$ ), the Erlang-A model degenerates into the M/M/$\infty$  queue. Alternatively, when the abandonment rate ($\theta$) is equated to the service rate $\mu$, the Erlang-A model also reduces to the M/M/$\infty$  queue. 

The M/M/c/c queue, a loss system where arriving customers are blocked if all servers are occupied, emerges from the Erlang-A model as the abandonment rate ($\theta$) approaches infinity ($\theta \to \infty$ ). This phenomenon, known as blocking, was first formally observed by \citet{hampshire2009time}).  

Finally, the M/M/c/$\infty$  queue, a system with a finite number of servers and no customer abandonment, is directly obtained by setting the abandonment rate to zero ($\theta =0$). This is intuitively evident, as the absence of an abandonment process is a defining characteristic of the M/M/c/$\infty$ queue.  This hierarchical relationship between the Erlang-A model and these classic queueing models underscores its significance as a unifying framework for understanding a wide range of queueing phenomena.

In \citet{halfin1981heavy}, an important insight emerged regarding multi-server queueing systems, emphasizing the ability to scale up both the arrival rate and the number of servers simultaneously. This scaling approach, known as the \emph{Halfin-Whitt} scaling, has become pivotal in modeling call centers within queueing literature, as illustrated in works such as \citet{pang2007martingale}. Shortly thereafter \citet{mandelbaum1998strong, massey2018dynamic}, showed that a scaled queue length process converges almost surely to a deterministic differential equation i.e.
 \begin{equation}\label{eq:fluid-limit}
\lim_{\eta\to\infty} \frac{1}{\eta} Q^{\eta}(t) =   q(t) \hspace{3mm}
\mathrm{a.s.}
\end{equation}
where the deterministic process $q(t)$, the \emph{fluid mean}, satisfies the following one dimensional ordinary differential equation (ODE),
\begin{equation} 
\label{fldmean}
 \shortdot{q}(t) = \lambda(t) - \mu \cdot (q(t) \wedge c) - \theta \cdot (q(t) - c)^+ .
\end{equation}

More recently, \citet{ko2023number} shows that the steady state dynamics of Equation (\ref{fldmean}) are given by
\begin{eqnarray*}
\lim_{t \to \infty} q(t) & = & \begin{cases}
\frac{\lambda-\mu c+\theta c}{\theta} & \mbox{if }\lambda>\mu c\\
\frac{\lambda}{\mu} & \mbox{if }\lambda\leq\mu c.
\end{cases}
\end{eqnarray*}

Now this section we consider the following impulsive differential equation:

\begin{eqnarray} \label{abeqn1}
    \shortdot{q}(t) &=& \lambda - \mu \left( q(t) \wedge c \right) - \theta ( q(t) - c )^+, \quad t \neq t_k, \\ \nonumber
    \Delta q|_{t = t_k} &=& q(t_k^+) - q(t_k^-) = b q(t_k^-), \quad t_k = k\delta, \quad k \in \mathbb{N},
\end{eqnarray}
where $\lambda$ is the arrival rate, $\mu$ is the service rate, $b$ is the proportional jump magnitude, $\delta$ is the time interval between impulses and $\theta$ is the abandonment rate.  Unlike the linear impulsive differential equation given in Equation (\ref{lineq1}), this equation involves many cases since the equation has two distinct dynamics above and below the number of servers depending on the model parameters.

\begin{theorem} \label{case1}
   Suppose we have impulsive differential equation given in Equation (\ref{abeqn1}) and we have the two conditions $\lambda < \mu c$ and $-1< b< 0 $.  Then, if $L,U$ denote the lower and upper values of the impulsive differential equation in steady state, we have 
    \begin{eqnarray}
     L &=&  \frac{\lambda (1+b)}{\mu} \left( \frac{1 - e^{-\mu \delta}}{1 - (1+b)e^{-\mu \delta}} \right) \\
    U &=& \frac{\lambda}{\mu} \left( \frac{1 - e^{-\mu \delta}}{1 - (1+b)e^{-\mu \delta}} \right).
    \end{eqnarray}
    Moreover, if $ 0 \leq b < e^{\mu \delta} -1 $, then L and U are switched.  
\begin{proof}
The proof is identical to the linear case. This completes the proof. 
\end{proof}
\end{theorem}

\begin{theorem}
Suppose we have impulsive differential equation given in Equation (\ref{abeqn1}) and we have the two conditions $\lambda \geq \mu c$ and $-1< b< 0 $.  Then, if $L,U$ denote the lower and upper values of the impulsive differential equation in steady state, we have 
    \begin{eqnarray}
     L &=&   ( 1+ b) \left( c + \frac{\lambda-\mu c}{\theta} \right) \left( \frac{1 - e^{-\mu \delta}}{1 - (1+b)e^{-\mu \delta}} \right)\\
    U &=& \left( c + \frac{\lambda-\mu c}{\theta} \right) \left( \frac{1 - e^{-\mu \delta}}{1 - (1+b)e^{-\mu \delta}} \right).
    \end{eqnarray}
\begin{proof}
    In the steady state, the time right before the impulse reaches a maximum we denote as $U$.  The time right after the impulse we denote as $L$.  Thus, we have the following equations 
    \begin{eqnarray} \label{Leqnab1}
        L &=& (1+b) U 
    \end{eqnarray}
    and 
    \begin{eqnarray} \label{Ueqnab1}
        U &=& \frac{\lambda}{\mu} + \left( L - \frac{\lambda}{\mu} \right) e^{-\mu \delta} .
    \end{eqnarray}
    Using these two equations, we have
        \begin{eqnarray}
        U &=& c + \frac{\lambda-\mu c}{\theta} + \left( L - c - \frac{\lambda-\mu c}{\theta} \right) e^{-\theta \delta} \\
        &=& c + \frac{\lambda-\mu c}{\theta} + \left( (1+b) U - c - \frac{\lambda-\mu c}{\theta} \right) e^{-\theta \delta} \\
        &=& \left( c + \frac{\lambda-\mu c}{\theta} \right) \left( \frac{1 - e^{-\theta \delta}}{1 - (1+b)e^{-\theta \delta}} \right).
    \end{eqnarray}
    The equality for $L$ follows from the relationship of $L$ and $U$ in Equation (\ref{Leqn}).  This completes the proof. 
\end{proof}
   C\end{theorem}

A new theorem where we have the impulse only work on the abandonment folks

\begin{theorem}
    Let $L,U$ be the lower and upper values of the linear impulsive differential equation in steady state, then 
    \begin{eqnarray}
     L &=&   ( 1+ b) \left( c + \frac{\lambda-\mu c}{\theta} \right) \left( \frac{1 - e^{-\mu \delta}}{1 - (1+b)e^{-\mu \delta}} \right)\\
    U &=& \left( c + \frac{\lambda-\mu c}{\theta} \right) \left( \frac{1 - e^{-\mu \delta}}{1 - (1+b)e^{-\mu \delta}} \right).
    \end{eqnarray}
\begin{proof}
    In the steady state, the time right before the impulse reaches a maximum we denote as $U$.  The time right after the impulse we denote as $L$.  Thus, we have the following equations 
    \begin{eqnarray} \label{Leqnab}
        L &=&  (U \wedge c) + (1+b) ( U - c)^+ 
    \end{eqnarray}
    and 
    \begin{eqnarray} \label{Ueqnab}
        U &=& \frac{\lambda}{\mu} + \left( L - \frac{\lambda}{\mu} \right) e^{-\mu \delta} .
    \end{eqnarray}
    Using these two equations, we have
        \begin{eqnarray}
        U &=& c + \frac{\lambda-\mu c}{\theta} + \left( L - c - \frac{\lambda-\mu c}{\theta} \right) e^{-\theta \delta} \\
        &=& c + \frac{\lambda-\mu c}{\theta} + \left( (U \wedge c) + (1+b) ( U - c)^+  - c - \frac{\lambda-\mu c}{\theta} \right) e^{-\theta \delta} \\
        &=& c + \frac{\lambda-\mu c}{\theta} + \left( (1+b) ( U - c)^+   - \frac{\lambda-\mu c}{\theta} \right) e^{-\theta \delta} \\
        &=&  c + \frac{\lambda-\mu c}{\theta} + \left( (1+b)  U - (1+b) c   - \frac{\lambda-\mu c}{\theta} \right) e^{-\theta \delta} \\
        &=& \left( c + \frac{\lambda-\mu c}{\theta} \right) \left( \frac{1 - e^{-\theta \delta}}{1 - (1+b)e^{-\theta \delta}} \right) - \frac{cb e^{-\theta }}{1 - (1+b)e^{-\theta \delta}}
    \end{eqnarray}
    The equality for $L$ follows from the relationship of $L$ and $U$ in Equation \ref{Leqn}.  This completes the proof. 
\end{proof}
\end{theorem}


\section{Designing the Impulse}

In impulsive differential equations, the state of the system undergoes sudden changes—often called "jumps"—at specific moments in time, determined by either fixed schedules or dynamic criteria. Designing when these impulses occur is crucial because the timing of interventions can dramatically alter the system's trajectory. If the impulses are too frequent or too rare, the system may diverge from desired behavior or become unstable. Therefore, determining optimal impulse times is a key part of system design, allowing for better control, stability, or performance. This design aspect enables the integration of discrete events into continuous dynamics, providing a powerful hybrid modeling framework that closely mirrors real-world systems where abrupt changes happen by necessity or design.

Applications of carefully designed impulsive interventions are widespread. In medicine, for example, timed drug administration in pharmacokinetics—such as insulin injections for diabetics—can be modeled using impulsive systems, where the effect of a dose causes an instantaneous drop in blood glucose. In ecology, pest control strategies often rely on scheduled impulses (e.g., pesticide applications) to reduce population levels at critical times. In engineering and robotics, impulse control is used in spacecraft navigation where thrusters provide sudden bursts of force to correct trajectories. Even in economics, impulse models are used to describe the impact of sudden policy changes or market shocks. In all these contexts, the value lies in precisely determining when to act, making the timing of impulses not just a modeling detail, but a central design problem.

In addition to medicine, ecology, and engineering, impulsive differential equations find important applications in neural modeling, where neurons exhibit sudden spikes in membrane potential known as action potentials. These spikes occur at discrete times and can be modeled as impulses, with the timing of each spike playing a critical role in how information is transmitted and processed in the brain. In power systems, impulse control is used to model and mitigate faults or switching events in electrical networks, helping ensure system stability and avoiding cascading failures. In supply chain management, inventory restocking can be viewed as an impulsive process, where stock is replenished at discrete times to avoid shortages or overstocking—optimally timing these interventions improves efficiency and reduces costs. Across these diverse fields, the ability to model and design the timing of sharp interventions offers a strategic advantage in controlling complex systems with both continuous dynamics and discrete events.

Another compelling area of application for impulsive differential equations is epidemiology, where public health interventions—such as vaccination campaigns, lockdowns, or mass testing—occur at discrete times to curb disease spread. Strategically timing these impulses can significantly influence the outbreak dynamics and reduce transmission. In financial mathematics, impulsive models capture sudden market interventions, such as interest rate adjustments or regulatory actions, where the market state changes abruptly. Similarly, in manufacturing systems, maintenance or quality control inspections can be modeled as impulses that reset or adjust the system state at scheduled intervals, optimizing productivity and reducing downtime. These examples illustrate how impulse timing is not only a mathematical feature but also a policy lever with substantial real-world impact, allowing decision-makers to guide complex systems toward desired outcomes.

\subsection{Impulse Design in Infinite Server}
While the previous sections focused on characterizing the amplitude and average dynamics of impulsive differential equations with fixed jump intervals, we now turn our attention to a practically significant inverse problem. More specifically, given a fixed time interval $[0,T]$, we seek to determine the optimal time to apply a single impulse in order to minimize/maximize average queue length. 

To address this optimization challenge systematically, we examine in this section the linear random impulsive differential equation (RIDE), which serves as a fundamental model for understanding the relationship between impulse timing and system performance. The linear framework provides analytical tractability while capturing the essential dynamics that govern the timing-performance relationship in more complex systems.

\textbf{}Now consider the impulsive differential equation:
Consider the impulsive differential equation:
\begin{eqnarray} \label{lineq1}
    \shortdot{q}(t) &=& \lambda - \mu q(t), \quad t \neq t_k, \\ \nonumber
    \Delta q|_{t = t_k} &=& q(t_k^+) - q(t_k^-) = b q(t_k^-), \quad t_k = k\delta, \quad k \in \mathbb{N},
\end{eqnarray}
where $\lambda$ is the arrival rate, $\mu$ is the service rate, $b$ is the proportional jump magnitude, $\delta = t_k - t_{k-1}$ is the time interval between impulses.  

We know that if $q(t)$ is the solution to the following differential equation
\begin{equation}
\shortdot{q} = \lambda - \mu q(t) 
\end{equation}
where $q(0) = q_0$, then, the solution for any value of $t$ is given by 
\begin{equation}
q(t) = \frac{\lambda}{\mu} + \left( q_0 - \frac{\lambda}{\mu} \right) e^{-\mu t} .
\end{equation}

To study the random case, we can use the method of iteration to see that the first time an impulse happens is 
\begin{eqnarray}
q(t_1) &=& \frac{\lambda}{\mu} + \left( q_0 - \frac{\lambda}{\mu} \right) e^{-\mu t_1} \label{eqn: basecase1}
\end{eqnarray}
and
\begin{eqnarray}
q(t_2) &=& \frac{\lambda}{\mu} + \left( (1+b)q(t_1) - \frac{\lambda}{\mu} \right) e^{-\mu (t_2- t_1)} \\
&=& \frac{\lambda}{\mu} + \left( \frac{\lambda (1+b)}{\mu} + (1+b)\left( q_0 - \frac{\lambda}{\mu} \right) e^{-\mu t_1}  - \frac{\lambda}{\mu} \right) e^{-\mu(t_2 - t_1)} \\
&=& \frac{\lambda}{\mu} + \left(  \frac{\lambda b}{\mu} + (1+b) \left( q_0 - \frac{\lambda}{\mu} \right) e^{-\mu t_1}  \right) e^{-\mu(t_2 - t_1)} \\
&=& \frac{\lambda}{\mu} +  \frac{\lambda b}{\mu} e^{-\mu(t_2 - t_1)}  + (1+b) \left( q_0 - \frac{\lambda}{\mu} \right) e^{-\mu t_2}  
\end{eqnarray}

\begin{theorem}
    Given that exactly 1 impulse is applied to the system at time $\tau$ during time interval $[0,T]$, the optimal time $\tau_U^*$ to apply the impulse so that the average queue length is maximized is:
    $$\tau_U^* = T,$$
    whereas the optimal time $\tau_L^*$ to apply the impulse so that the average queue length is minimized is:
    $$\tau_L^* = \begin{cases}
        \frac{T}{2} + \frac{\ln \left( 1-q_0\frac{\mu}{\lambda} \right)}{2 \mu} & \text{when } q_0 < \frac{\lambda}{\mu}\\
        0 & \text{o.w.}
    \end{cases} $$

    \begin{proof}
    We first let $\tau$ be the time of the impulse and define the average queue length and the optimizers more rigorously in a mathematical manner:
    $$J(\tau) = \frac{1}{T} \left( \int_0 ^\tau q_{1}(t) dt +  \int_\tau^T q_{2}(t) dt \right)$$
    $$\tau_U^* = \underset{\tau}{\mathrm{argmax}}_{\tau \in [0,T]} J(\tau) , \quad \quad \tau_L^* = \underset{\tau}{\mathrm{argmin}}_{\tau \in [0,T]} J(\tau).$$
    where $q_{1}(t)$ and $q_1(t)$ denote the solution for our differential Equation \ref{lineq1} on intervals $[0,\tau]$ and $[\tau, T]$.
        Denoting the stationary mean $\xi = \frac{\lambda}{\mu}$ and write down the solution for our differential equation:

\begin{align*}
    q_1(t) &= \xi + (q_0 - \xi) e^{-\mu t}  &\text{ for } 0 \leq t \leq \tau \\
    q_2(t) &= \xi + \bigg( b\xi + b(q_0-\xi)e^{-\mu \tau} - \xi \bigg) e^{-\mu(t-\tau)} \\
    &= \xi_2 + (b-1) \xi_2 \cdot e^{-\mu(t-t_1)} + b(q_0-\xi_2)e^{-\mu t}  &\text{ for } \tau < t \leq T
\end{align*}

Thus,
\begin{align*}
    \int_0^T q(t) dt &= \int_0^\tau q_1(t) dt + \int_\tau^T q_2(t)\\
    &= \xi t + \frac{q_0 - \xi}{\mu}\left(1-e^{-\mu \tau}\right) + \frac{\lambda(b-1)}{\mu^2}\left(1-e^{-\mu (T-\tau)}\right) + \frac{b(q_0 - \xi)}{\mu}\left(e^{-\mu \tau}-e^{-\mu T}\right)\\
    \frac{d\left(\int_0^T q(t) dt\right)}{d\tau} &=( {q_0 - \xi})e^{-\mu \tau}-\frac{\lambda(b-1)}{\mu}e^{-\mu (T-\tau)}-b( {q_0 - \xi})e^{-\mu \tau}\\
    &= (1-b)( {q_0 - \xi})e^{-\mu \tau}-\frac{\lambda(b-1)}{\mu}e^{-\mu (T-\tau)}
\end{align*}

We notice:

(i) When $q_0 > \xi$, the derivative function is strictly positive, making 
$$\tau_L^* = \underset{\tau}{\mathrm{argmin}}_{\tau} \int_0^T q(t) dt = 0,$$
$$\tau_U^* =\underset{\tau}{\mathrm{argmax}}_{\tau} \int_0^T q(t) dt = T.$$

(ii) When $q_0 \leq \xi$, set derivative to 0:
\begin{align*}
    ( \xi-q_0)e^{-\mu \tau_L^*}=\xi e^{-\mu (T-\tau_L^*)}\\
    \ln(\xi-q_0)-\mu\tau_L^* = \ln(\xi)-\mu T +\mu\tau_L^*\\
    \tau_L^* = \frac{\ln\left(\frac{\xi-q_0}{\xi}\right)}{2\mu} + \frac{T}{2}.
\end{align*}
And since $q(t) = \xi + (q_0-\xi)e^{-\mu t}$ when $\tau = T$ and $q(t) = \xi + (bq_0-\xi)e^{-\mu t}$ when $\tau = 0$, we have
$$\tau_U^* = \underset{\tau}{\mathrm{argmax}}_{t_1} \int_0^t q(t) dt = T.$$

    \end{proof}
\end{theorem}

\subsection{Impulse Design in Erlang-A}

With the linear RIDE framework in the preceding section shedding some insights into optimal impulse timing, we now extend our analysis to a more realistic and practically relevant queueing system. More specifically, we address the same inverse optimization problem but now considering the underlying dynamics of an Erlang-A queue.

The Erlang-A queueing model represents a significant advancement in complexity from the linear case, as it incorporates the stochastic nature of customer arrivals, service processes, and importantly, customer abandonment behavior. Unlike the linear dynamics studied previously, impulse optimization in the Erlang-A system is more complicated, lacking general analytical solution sometimes due to the interaction between queue length, service capacity, and abandonment rates.

This extension to the Erlang-A framework presents analytical challenges that were absent in the linear case. The presence of capacity and customer abandonment introduces complexity as the system's response to impulses now depends on these parameters and also where the queue started.
In this section, we investigate how the optimal impulse timing strategies derived for linear systems translate to this more complex queueing environment, and identify the key factors that distinguish optimal control policies in different scenarios.

Formally, in this section, we continue to assume that the system evolves according to the fluid-limit impulsive differential equation described by Equation \ref{fldmean} with impulse applied according to Equation \ref{abeqn1}. We alter the definition of $b$ in Equation \ref{abeqn1} slightly to:
\begin{eqnarray} \label{eqn: erlangdefinition}
    \shortdot{q}(t) &=& \lambda - \mu \left( q(t) \wedge c \right) - \theta ( q(t) - c )^+, \quad t \neq t_k, \\ \nonumber
    \Delta q|_{t = t_k} &=& q(t_k^+) - q(t_k^-) = (b-1) q(t_k^-), \quad t_k = k\delta, \quad k \in \mathbb{N},
\end{eqnarray}
for the sake of simplicity in calculation. The objective is to select $\tau_U^* \in [0,T]$ and $\tau_L^* \in [0,T]$ that respectively maximizes and minimizes the average queue length:
$$J(\tau) = \frac{1}{T} \left( \int_0 ^\tau q_{1}(t) dt +  \int_\tau^T q_{2}(t) dt \right).$$
i.e.$$\tau_U^* = \underset{\tau}{\mathrm{argmax}}_{\tau \in [0,T]} J(\tau), \quad \quad \tau_L^* = \underset{\tau}{\mathrm{argmin}}_{\tau \in [0,T]} J(\tau).$$
In the above expressions, $q_{1}(t)$ and $q_1(t)$ denote the solution for our differential equation \ref{abeqn1} on intervals $[0,\tau]$ and $[\tau, T]$.
Before proceeding to the main result on optimal jumping times, we first present a result from recent research by \citet{ko2023number}, in which the explicit solution to the fluid queue length in the steady state is presented.

\begin{proposition} \label{prop: ko}
    When $\lambda(t)$ and $\mu(t)$ are constants, the solution $q(t)$ to Eq. (4) is given as follows:
    \begin{equation}
        q(t) = \begin{cases}
        \frac{\lambda-\mu c+\theta c}{\theta} + \left(q(0) - \frac{\lambda-\mu c+\theta c}{\theta}\right) e^{-\theta t} & \text{if } q(0) > c, \lambda > \mu c \\[0.5em]
        \frac{\lambda-\mu c+\theta c}{\theta} + \left(q(0) - \frac{\lambda-\mu c+\theta c}{\theta}\right) e^{-\theta t}, & \text{if } q(0) > c, \lambda \leq \mu c, t \leq t_1^* \\[0.5em]
        \frac{\lambda}{\mu} + \left(c - \frac{\lambda}{\mu}\right) e^{-\mu(t-t^*)}, & \text{if } q(0) > c, \lambda \leq \mu c, t > t_1^* \\[0.5em]
        \frac{\lambda}{\mu} + \left(q(0) - \frac{\lambda}{\mu}\right) e^{-\mu t} & \text{if } q(0) \leq c, \lambda \leq \mu c \\[0.5em]
        \frac{\lambda}{\mu} + \left(q(0) - \frac{\lambda}{\mu}\right) e^{-\mu t}, & \text{if } q(0) \leq c, \lambda > \mu c, t \leq t_2^* \\[0.5em]
        \frac{\lambda-\mu c+\theta c}{\theta} + \left(\frac{-\lambda+\mu c}{\theta}\right) e^{-\theta(t-t^*)}, & \text{if } q(0) \leq c, \lambda > \mu c, t > t_2^*
        \end{cases}
        \tag{6}
    \end{equation}
    where $t_1^* = \frac{\log\left(\frac{\theta q(0)-\lambda+\mu c-\theta c}{\mu c-\lambda}\right)}{\theta}$ and $t_2^* = \frac{\log\left(\frac{q(0)-\frac{\lambda}{\mu}}{c-\frac{\lambda}{\mu}}\right)}{\mu}$.
\end{proposition}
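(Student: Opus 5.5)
The plan is to solve the fluid ODE (\ref{fldmean}) with constant $\lambda$ directly, regime by regime. On each side of the line $q=c$ the right-hand side is affine in $q$, so within a regime Lemma \ref{ode_soln} applies.

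\textbf{Regime above capacity.} For $q>c$ the equation reads $\shortdot{q}=\lambda-\mu c-\theta(q-c)=(\lambda-\mu c+\theta c)-\theta q$. This has the form treated in Lemma \ref{ode_soln} with $\lambda$ replaced by $\lambda-\mu c+\theta c$ and $\mu$ replaced by $\theta$. So as long as the trajectory stays above $c$,
\[
q(t)=\frac{\lambda-\mu c+\theta c}{\theta}+\left(q(0)-\frac{\lambda-\mu c+\theta c}{\theta}\right)e^{-\theta t}.
\]

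\textbf{Regime at or below capacity.} For $q\le c$ the equation reads $\shortdot{q}=\lambda-\mu q$. Lemma \ref{ode_soln} gives $q(t)=\frac{\lambda}{\mu}+(q(0)-\frac{\lambda}{\mu})e^{-\mu t}$ while the trajectory stays at or below $c$.

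\textbf{Invariance and crossing.} The key step is to decide when a trajectory leaves its starting region; everything follows from the signs of the two equilibria relative to $c$.
\begin{itemize}
\item If $q(0)>c$ and $\lambda>\mu c$, the upper equilibrium $c+\frac{\lambda-\mu c}{\theta}$ exceeds $c$. Both $q(0)$ and the limit lie above $c$, and the exponential interpolation between them is monotone, so $q$ never reaches $c$. This gives case 1.
\item If $q(0)>c$ and $\lambda\le\mu c$, the upper equilibrium is at most $c$. The trajectory decreases monotonically toward it and hits $c$ at the time $t_1^*$ solving
\[
c=\frac{\lambda-\mu c+\theta c}{\theta}+\left(q(0)-\frac{\lambda-\mu c+\theta c}{\theta}\right)e^{-\theta t},
\]
that is, $e^{\theta t_1^*}=\frac{\theta q(0)-\lambda+\mu c-\theta c}{\mu c-\lambda}$. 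This matches the stated $t_1^*$.
\item Cases 4--6 are handled symmetrically from below. If $q(0)\le c$ and $\lambda\le\mu c$, the lower equilibrium $\lambda/\mu\le c$, so the trajectory stays at or below $c$. If $\lambda>\mu c$, the trajectory rises toward $\lambda/\mu>c$ and reaches $c$ at $t_2^*$ given by $e^{\mu t_2^*}=\frac{q(0)-\lambda/\mu}{c-\lambda/\mu}$.
\end{itemize}

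\textbf{Restart after crossing.} After a crossing time we restart Lemma \ref{ode_soln} in the new regime with initial value $c$ at time $t^*$.
\begin{itemize}
\item In case 3 this gives $\frac{\lambda}{\mu}+(c-\frac{\lambda}{\mu})e^{-\mu(t-t^*)}$. In this regime $\lambda/\mu\le c$, so the trajectory stays below $c$ forever.
\item In case 6 it gives $\frac{\lambda-\mu c+\theta c}{\theta}+\left(c-\frac{\lambda-\mu c+\theta c}{\theta}\right)e^{-\theta(t-t^*)}$, and $c-\frac{\lambda-\mu c+\theta c}{\theta}=\frac{\mu c-\lambda}{\theta}$ as stated. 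Here the equilibrium exceeds $c$, so the trajectory stays above $c$ forever.
\end{itemize}

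Uniqueness of solutions follows because the vector field is globally Lipschitz (piecewise linear and continuous at $q=c$), so gluing the pieces at $t^*$ gives the solution.

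\textbf{Main obstacle.} The main difficulty is bookkeeping in the boundary cases.
\begin{itemize}
\item When $\lambda=\mu c$ in case 2, the formula for $t_1^*$ degenerates, since $t_1^*=\infty$: the trajectory approaches $c$ without crossing. The stated formulas remain consistent if one interprets $t_1^*=\infty$.
\item We must check that the trajectory cannot re-cross after switching regimes, which follows from the equilibrium comparisons above.
\item We also note that the statement's $t^*$ means $t_1^*$ or $t_2^*$ respectively.
\end{itemize}
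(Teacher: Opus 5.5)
The paper does not prove this proposition. It quotes it from \citet{ko2023number} and uses it as a black box in the appendix. Your argument is correct and supplies a self-contained derivation by a different route.

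Your steps all go through:
\begin{itemize}
\item On each side of $q=c$ the drift is affine, so Lemma~\ref{ode_soln} gives the candidate $\xi+(q(0)-\xi)e^{-rt}$, with $(\xi,r)=\bigl(\frac{\lambda-\mu c+\theta c}{\theta},\theta\bigr)$ above capacity or $\bigl(\frac{\lambda}{\mu},\mu\bigr)$ at or below capacity.
\item This candidate is a convex combination of $q(0)$ and $\xi$. So it cannot leave its starting region unless $\xi$ lies strictly on the other side of $c$. In that case it hits $c$ at the stated $t_1^*$ or $t_2^*$, and your algebra for both hitting times checks out.
\item After restarting at $c$ in the other regime, the new equilibrium lies strictly on the far side, so there is no re-crossing.
\item Gluing is justified by global Lipschitz continuity of the drift together with its continuity at $q=c$. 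Both one-sided derivatives at $t^*$ equal $\lambda-\mu c$. Equivalently, the glued function satisfies $q(t)=q(0)+\int_0^t f(q(s))\,ds$.
\end{itemize}

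Citing the result is shorter, but it leaves several hypotheses implicit, and your route makes them explicit:
\begin{itemize}
\item The formulas need $\theta,\mu>0$.
\item The $t^*$ in the third and sixth rows means $t_1^*$ and $t_2^*$ respectively.
\item At $\lambda=\mu c$ the second row holds for all $t$, with $t_1^*=+\infty$.
\end{itemize}
Your derivation also makes clear that the proposition is the full transient solution of (\ref{fldmean}), not a steady-state statement as the paper's lead-in suggests.
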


We can see that the dynamics of the system differs drastically depending on the initial queue size, and whether the system is overloaded ($\lambda > \mu c$), or underloaded ($\lambda \leq \mu c$). These distinctions give rise to different cases for analysis and also partition the time interval into distinct regimes according to impulse time. To tailor the notation for the analysis we provide in this paper, we modify the notation in Proposition \ref{prop: ko} slightly. Since $t_1^*$ and $t_2^*$ are essentially dependent on $q_0$, we use $t_{1,q_0}^*$ and $t_{2,q_0}^*$ to denote the time that the system reaches capacity $c$ given that it starts at $q_0$. Now we give the main result below, accounting for all instances and regimes.

\begin{theorem} \label{thm:erlang}
     Assuming an impulse is applied at time $\tau \in [0,T]$ to the Erlang-A system described by Equation \ref{eqn: erlangdefinition} with constant $\lambda(t), \mu(t)$ and initial condition $q(t) = q_0$. We also denote the steady state mean $\xi_1 = \frac{\lambda-\mu c + \theta c}{\theta}, \xi_2 = \frac{\lambda}{\mu}$ Then the optimal jumping time $\tau_U^*$ maximizing the average queue length $J(\tau)$ is equal to $T$.
    \begin{proof}
            The proof is provided in the A.ppendix
    \end{proof}
\end{theorem}

\begin{theorem} \label{thm:erlang}
    Assume an impulse is applied at time $\tau \in [0,T]$ to the Erlang-A system described by Equation \ref{eqn: erlangdefinition} with constant $\lambda(t), \mu(t)$ and initial condition $q(t) = q_0$. We also denote the steady state mean $\xi_1 = \frac{\lambda-\mu c + \theta c}{\theta}, \xi_2 = \frac{\lambda}{\mu}$ Then the structure of the minimizer $\tau_L^*$ depends on the initial condition $q_0$ and the relative load $\frac{\lambda}{\mu c}$, which falls into one of the following four cases: (i) $q_0 > c, \lambda > \mu c$, (ii) $q_0 \leq c, \lambda > \mu c$, (iii) $q_0 > c, \lambda \leq \mu c$, and (iv) $q_0 \leq c, \lambda \leq \mu c$. Under case $i$, the interval $[0,T]$ can be decomposed into sub-intervals $\{I_{i,j}\}_{j=1}^{n}$ depending on the case. The sub-intervals, and the optimizers whose closed form can be obtained on respective sub-intervals, denoted by $\tau_{I_{i,j}}^*$, are described in the follows.
        \begin{enumerate}[label=(\roman*)]
            \item  $I_{1,2-\mathbbm{1}\{q_0 > \xi_1\}} =\left[ 0, \frac{\ln\left( \frac{q_0-\xi_1}{c/b - \xi_1}\right)}{\theta} \right]$, $I_{1,1+\mathbbm{1}\{q_0 > \xi_1\}} = \left[ \frac{\ln\left( \frac{q_0-\xi_1}{c/b - \xi_1}\right)}{\theta}, T \right]$. 
            $$\tau_{I_{1,1}}^* = \begin{cases}
                \frac{T}{2} + \frac{\ln \left(1-q_0/\xi_1\right)}{2\theta} & \text{when } q_0 \leq \xi_1\\
        0 & \text{o.w.}
            \end{cases} ;$$
            
            \item When $T > \frac{\ln \left( \frac{b(q_0-\xi_1)}{c-b\xi_1} \right)}{\theta}$, $I_{2,1} = \left[ 0, t_{2,q_0}^*\right]$, $I_{2,2} = \left[t_{2,q_0}^*,\frac{\ln \left( \frac{b(q_0-\xi_1)}{c-b\xi_1} \right)}{\theta} \right]$, $I_{2,3} = \left[ \frac{\ln \left( \frac{b(q_0-\xi_1)}{c-b\xi_1} \right)}{\theta} , T\right]$, and
            $$\tau_{I_{2,1}} =t_{2,q_0}^* , \tau_{I_{2,3}} = \frac{T+t_{2,q_0}^*}{2} + \frac{\ln(1-c/\xi_2)}{2\theta};$$ When $t_{2,q_0}^*< T \leq \frac{\ln \left( \frac{b(q_0-\xi_1)}{c-b\xi_1} \right)}{\theta}$, $I_{2,1} = \left[ 0, t_{2,q_0}^*\right]$, $I_{2,4} = \left[t_{2,q_0}^*,T \right]$.
            
            When $T \leq t_{2,q_0}^*$, $I_{2,5} = \left[ 0,T \right]$, and
            $$ \tau_L^* =\tau_{I_{2,5}} = \frac{T}{2} + \frac{\ln \left( 1-q_0/\xi_2 \right)}{2\mu}.$$
            
            \item When $T > \frac{\ln\left( \frac{q_0-\xi_1}{c/b-\xi_1}\right)}{\theta}$, $I_{3,1} = \left[ 0, \frac{\ln \left( \frac{b(q_0-\xi_1)}{c-b\xi_1} \right)}{\theta} \right]$, $I_{3,2} = \left[ \frac{\ln \left( \frac{b(q_0-\xi_1)}{c-b\xi_1} \right)}{\theta}, t_{1,q_0}^* \right]$, $I_{3,3} = \left[ t_{1,q_0}^* , T\right]$, and
            $$\tau_{I_{3,1}} = 0, \tau_{I_{3,3}} = t_{1,q_0}^*;$$
            When $T \leq \frac{\ln\left( \frac{q_0-\xi_1}{c/b-\xi_1}\right)}{\theta}$, $I_{3,4} = \left[ 0, T\right] $, and 
            $$\tau_L^* =\tau_{I_{3,4}} = 0;$$

            \item $I_{4,1} = \left[ 0, T \right]$.
            $$\tau_L^* =\tau_{I_{4,1}}^* = \begin{cases}
        \frac{T}{2} + \frac{\ln(1-q_0/\xi_2)}{2 \mu} & \text{when } q_0 \leq \xi_2\\
        0 & \text{o.w.}
    \end{cases}$$
        \end{enumerate}
        On all remaining unspecified intervals, i.e. $I_{1,2}, I_{2,2}, I_{2,4}, I_{3,2} $, the optimizers lack analytic representations under general assumptions and should be minimized via numerical methods.

    \begin{proof}
            The proof could be found in the Appendix.
        \end{proof}
\end{theorem}

\section{Numerical Experiments}
To better illustrate the system dynamics and the optimal impulse time, we perform numerical experiment for each of the four scenarios specified in Theorem \ref{thm:erlang}, corresponding to figures 1-4. In each experiment, we solve for the ode numerically using the adaptive step-size Runge-Kutta method  with integration density 1000 and compute the average trajectory using the trapezoidal rule. Finally, we plot the average queue length as a function of time of the impulse. The red and yellow vertical lines represents the analytical solutions of optimal impulse application time with red line corresponding to the minimizer and yellow the maximizer. Note that on some intervals there is a lack of closed form analytical solution. For those intervals, we choose the modified Powell's hybrid method to numerically solve for the minimizer and synthesize with the minimizer on the other intervals. We choose this method to balance speed and robustness.

\subsection{\texorpdfstring{$q_0 > c, \lambda > \mu c$}{q0 > c, lambda > mu c}}
In the graphs below, Figure \ref{fig:1_a} corresponds to the case where the system starts below the steady state mean and Figure \ref{fig:1_b} corresponds to the case where the system starts above the steady state mean. The parameters used are shown in the plots.

\begin{figure}[h!]
    \centering
    \begin{subfigure}[b]{0.45\textwidth}
        \centering
        \includegraphics[width=\textwidth]{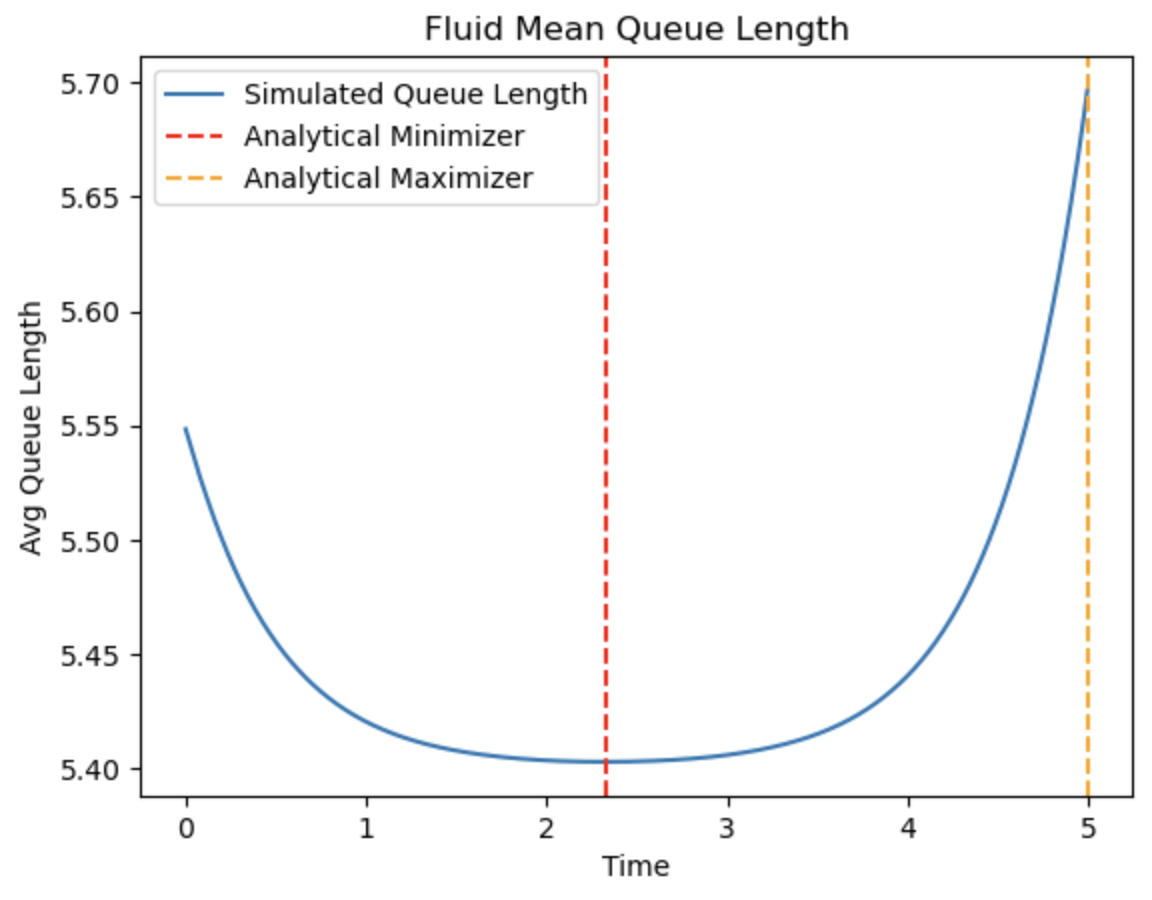}
        \caption{}
        \label{fig:1_a}
    \end{subfigure}
    \hfill
    \begin{subfigure}[b]{0.45\textwidth}
        \centering
        \includegraphics[width=\textwidth]{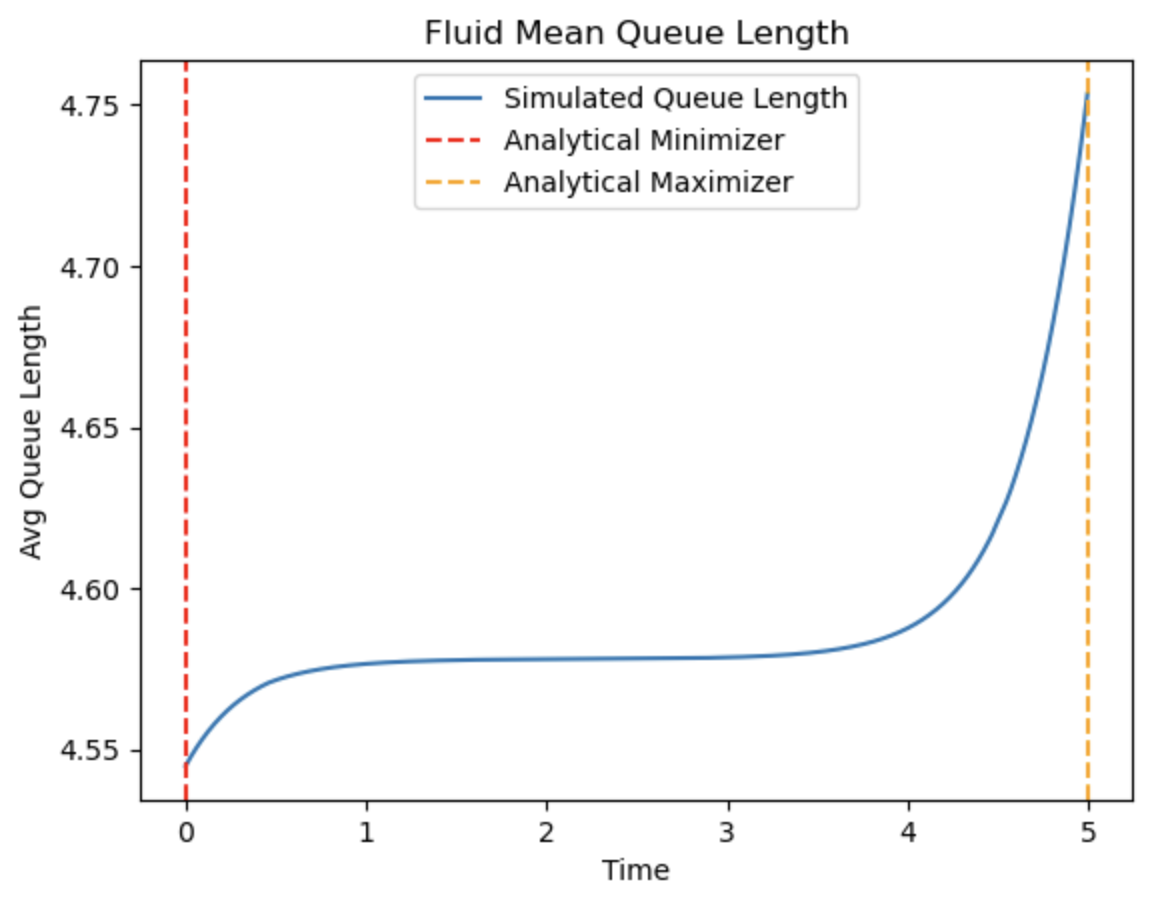}
        \caption{}
        \label{fig:1_b}
    \end{subfigure}
    \caption{Fluid Mean of Overload System (starts above capacity). \textbf{(a).} $\lambda = 10, \mu = 1, b = 0.5, T = 5, q_0 = 3, \theta = 2, c = 2$. \textbf{(b). }$\lambda = 10, \mu = 2, b = 0.5, T = 5, q_0 = 6, \theta = 3, c = 4$.}
    \label{fig:main_figure}
\end{figure}

\subsection{$q_0 \leq c, \lambda > \mu c$}
There are three sub-cases corresponding to different time horizons. We ran experiment of each of the sub-cases and plotted three plots correspondingly. Figure \ref{fig:2a} corresponds to the case where $T > \frac{\ln \left( \frac{b(q_0-\xi_2)}{c-b\xi_2} \right)}{\mu} $; Figure \ref{fig:2b} corresponds to the case where $t_{2,q_0}^*< T \leq \frac{\ln \left( \frac{b(q_0-\xi_2)}{c-b\xi_2} \right)}{\mu}$; and Figure \ref{fig:2c} corresponds to the case where $T \leq t_{2,q_0}^*$.  Under the parameters used for this case, which are specified in the plots, $\frac{\ln \left( \frac{b(q_0-\xi_2)}{c-b\xi_2} \right)}{\mu} \approx 0.405$ and $t_{2,q_0}^* \approx 0.223$.
\clearpage
\begin{figure}[h!]
    \centering
    \begin{subfigure}[b]{0.45\textwidth}
        \centering
        \includegraphics[width=\textwidth]{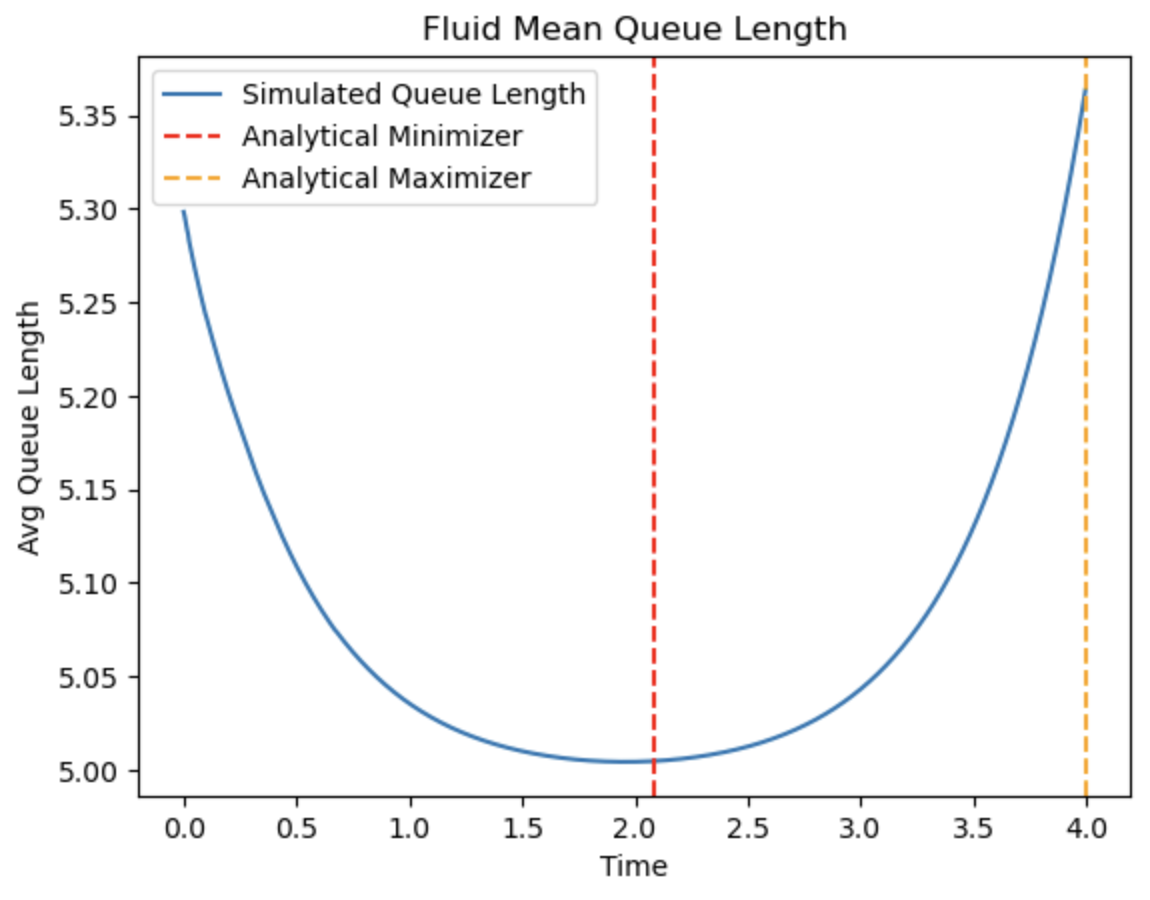}
        \caption{}
        \label{fig:2a}
    \end{subfigure}
    \hfill
    \begin{subfigure}[b]{0.45\textwidth}
        \centering
        \includegraphics[width=\textwidth]{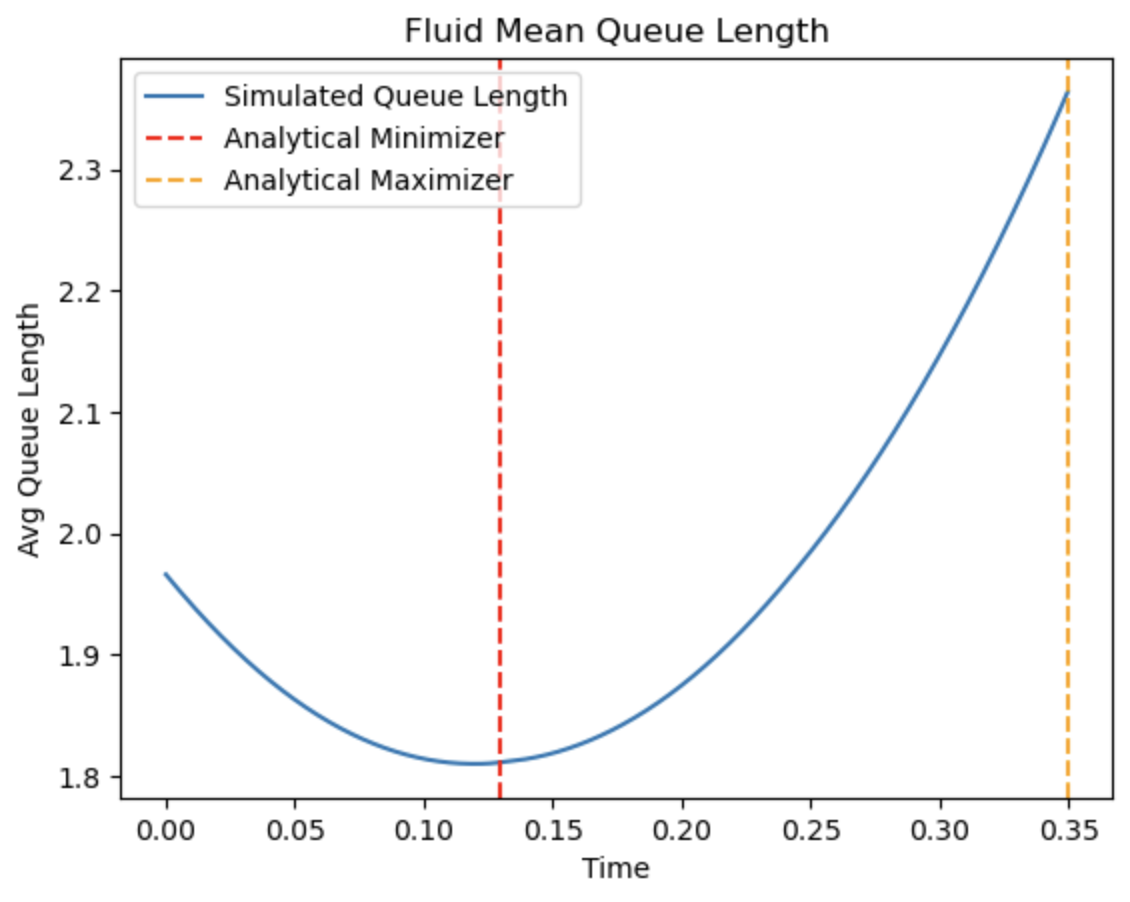}
        \caption{}
        \label{fig:2b}
    \end{subfigure}

    \begin{subfigure}[b]{0.45\textwidth}
        \centering
        \includegraphics[width=\textwidth]{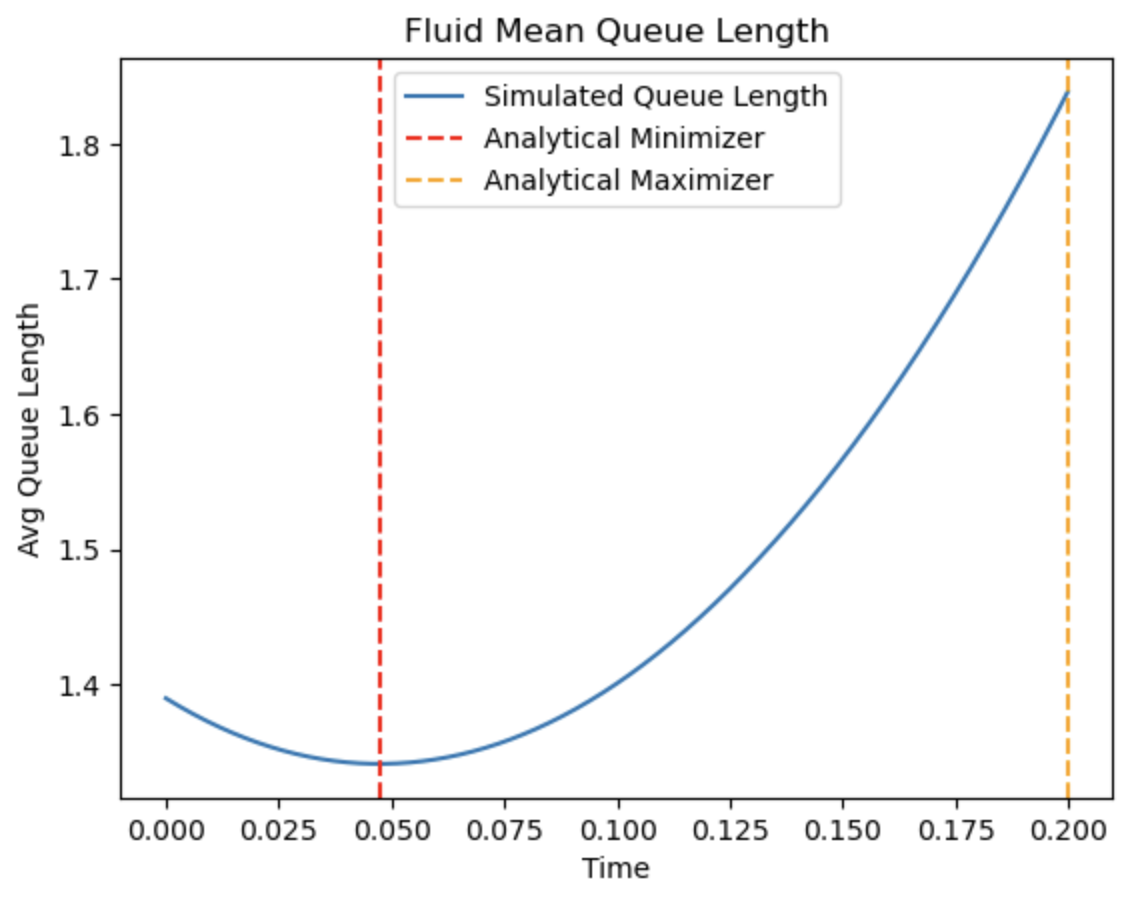}
        \caption{}
        \label{fig:2c}
    \end{subfigure}
    
    \caption{Fluid Mean of Overload System (starts below capacity). \textbf{(a). }$\lambda = 10, \mu = 1, b = 0.5, T = 4, q_0 = 1, \theta = 2, c = 2$. \textbf{(b). }$\lambda = 10, \mu = 1, b = 0.5, T = 0.35, q_0 = 1, \theta = 2, c = 2$. \textbf{(c). }$\lambda = 10, \mu = 1, b = 0.5, T = 0.2, q_0 = 1, \theta = 2, c = 2$.} 
    \label{fig:main_figure}
\end{figure}

\subsection{$q_0 > c, \lambda \leq \mu c$}

Theoretically are two sub-cases corresponding to different time horizons, but to be more comprehensive, we ran experiment on three sub-cases and plotted three plots correspondingly. In reality, notice that when $q_0 \leq c/b$, $\frac{\ln\left( \frac{q_0-\xi_1}{c/b-\xi_1}\right)}{\theta} \leq 0$, and thus the two scenarios reduce to one. So we designate one experiment to that specific case. The figures' correspondence are specified as follows: Figure \ref{fig:3a} corresponds to the case where $q_0 >  c/b$ and $T > \frac{\ln\left( \frac{q_0-\xi_1}{c/b-\xi_1}\right)}{\theta}$; Figure \ref{fig:3b} corresponds to the case where $q_0 >c/b$ and $T \leq \frac{\ln\left( \frac{q_0-\xi_1}{c/b-\xi_1}\right)}{\theta}$; Finally Figure \ref{fig:3c} corresponds to the case where $q_0 \leq c/b$ with a random $T$. Under the parameters used for this case, which are specified in the plots, $\frac{\ln \left( \frac{b(q_0-\xi_1)}{c-b\xi_1} \right)}{\theta} \approx 0.168$ in the first two cases.
\clearpage
\begin{figure}[h!]
    \centering
    \begin{subfigure}[b]{0.45\textwidth}
        \centering
        \includegraphics[width=\textwidth]{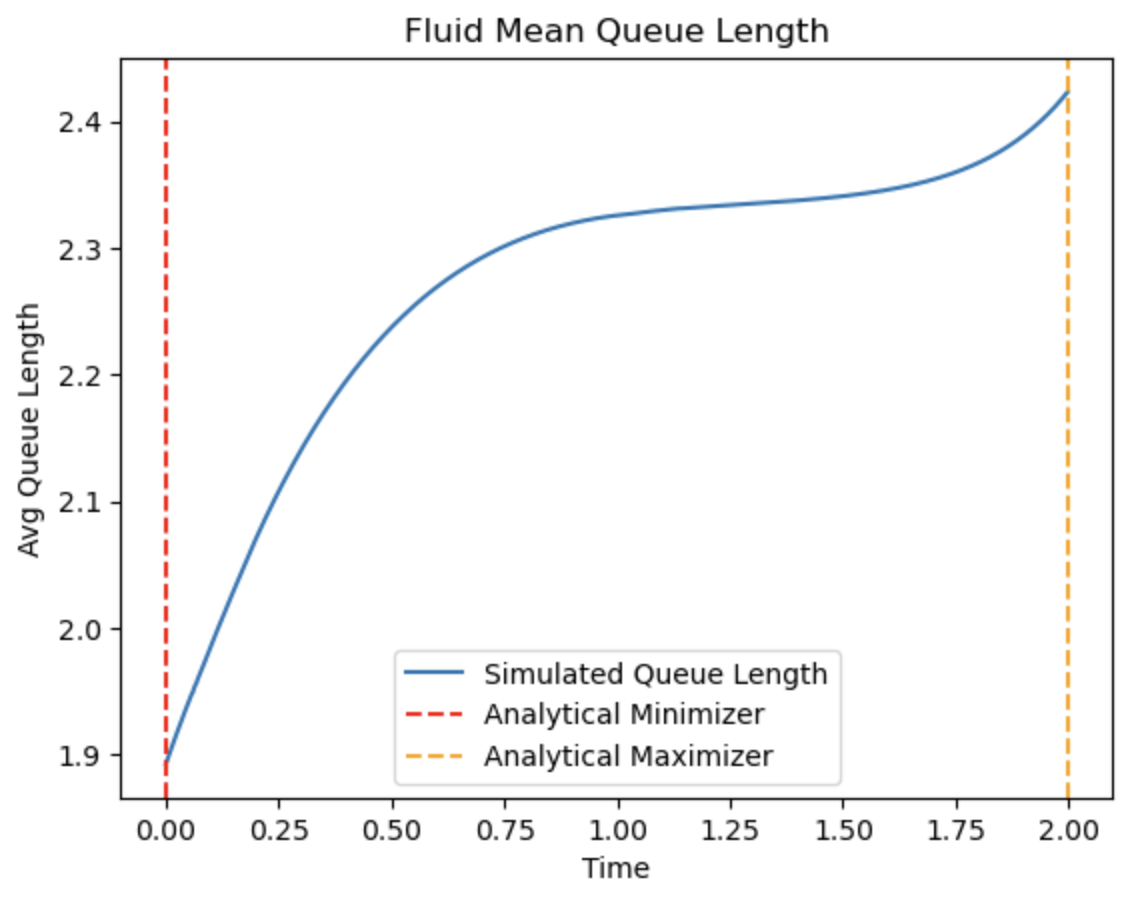}
        \caption{}
        \label{fig:3a}
    \end{subfigure}
    \hfill
    \begin{subfigure}[b]{0.45\textwidth}
        \centering
        \includegraphics[width=\textwidth]{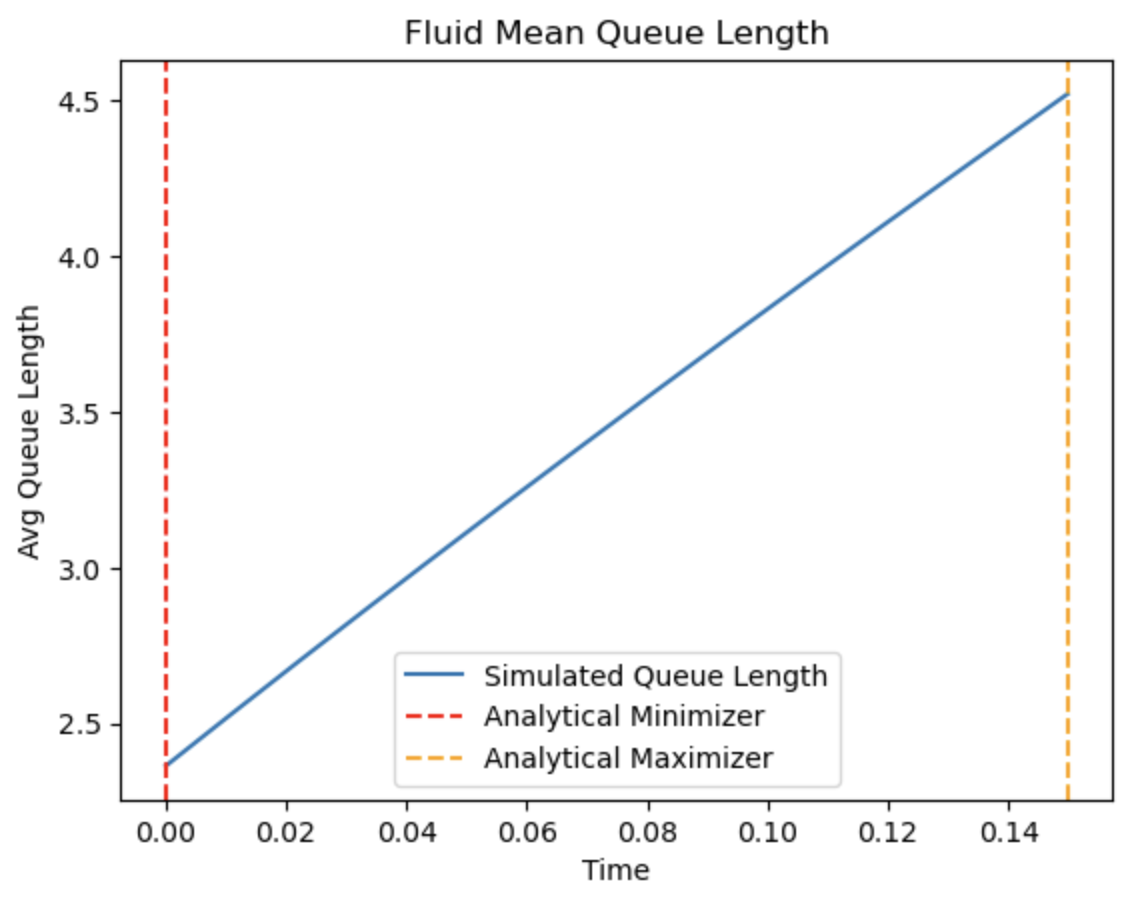}
        \caption{}
        \label{fig:3b}
    \end{subfigure}

    \begin{subfigure}[b]{0.45\textwidth}
        \centering
        \includegraphics[width=\textwidth]{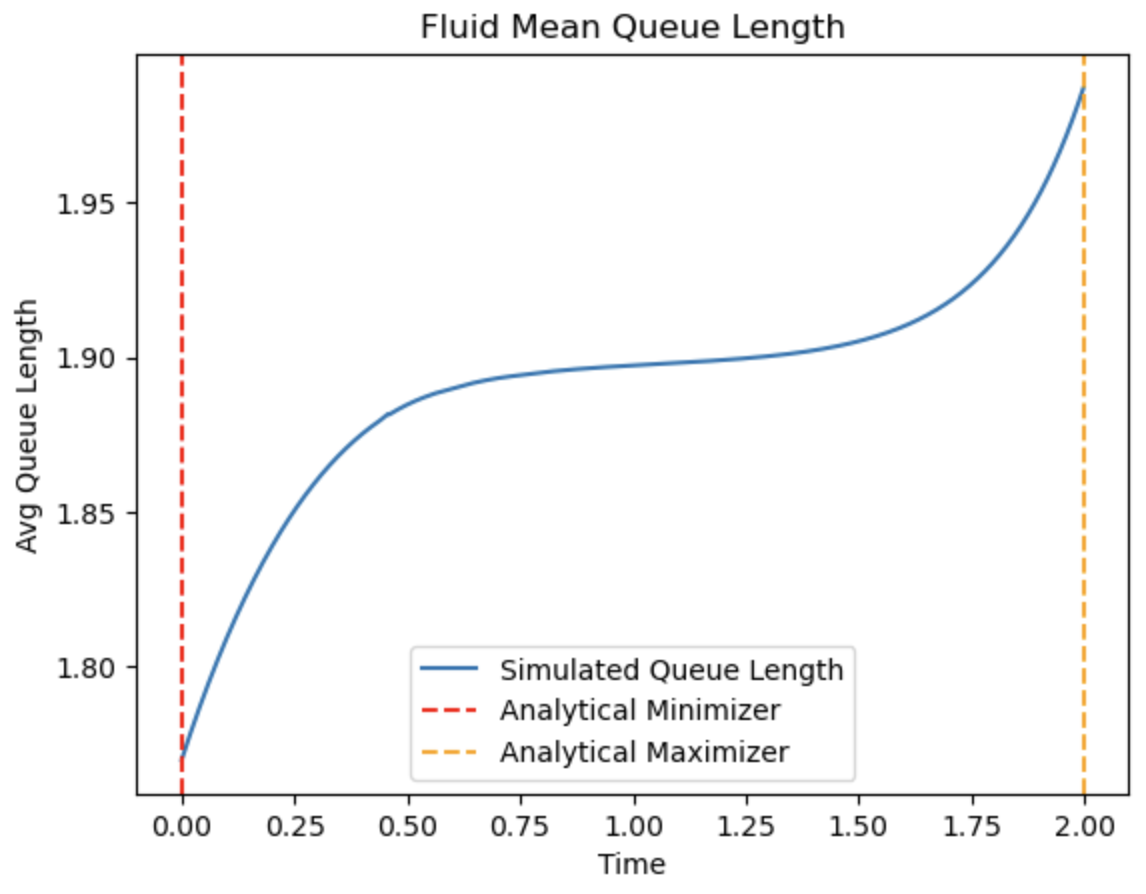}
        \caption{}
        \label{fig:3c}
    \end{subfigure}
    
    \caption{Fluid Mean of Underload System (starts above capacity).  \textbf{(a). }$\lambda = 9, \mu = 5, b = 0.5, T = 2, q_0 = 5, \theta = 2, c = 2$. \textbf{(b). }$\lambda = 9, \mu = 5, b = 0.5, T = 0.15, q_0 = 5, \theta = 2, c = 2$. \textbf{(c). }$\lambda = 9, \mu = 5, b = 0.5, T = 2, q_0 = 3, \theta = 2, c = 2$.}
    \label{fig:main_figure}
\end{figure}

\subsection{$q_0 \leq c, \lambda \leq \mu c$}
Since there is only one interval for the entire case, we only need one numerical experiment. Yet, for the sake of comprehensiveness, we still present two experiments, each with a different time horizon.
\clearpage
\begin{figure}[h!]
    \centering
    \begin{subfigure}[b]{0.45\textwidth}
        \centering
        \includegraphics[width=\textwidth]{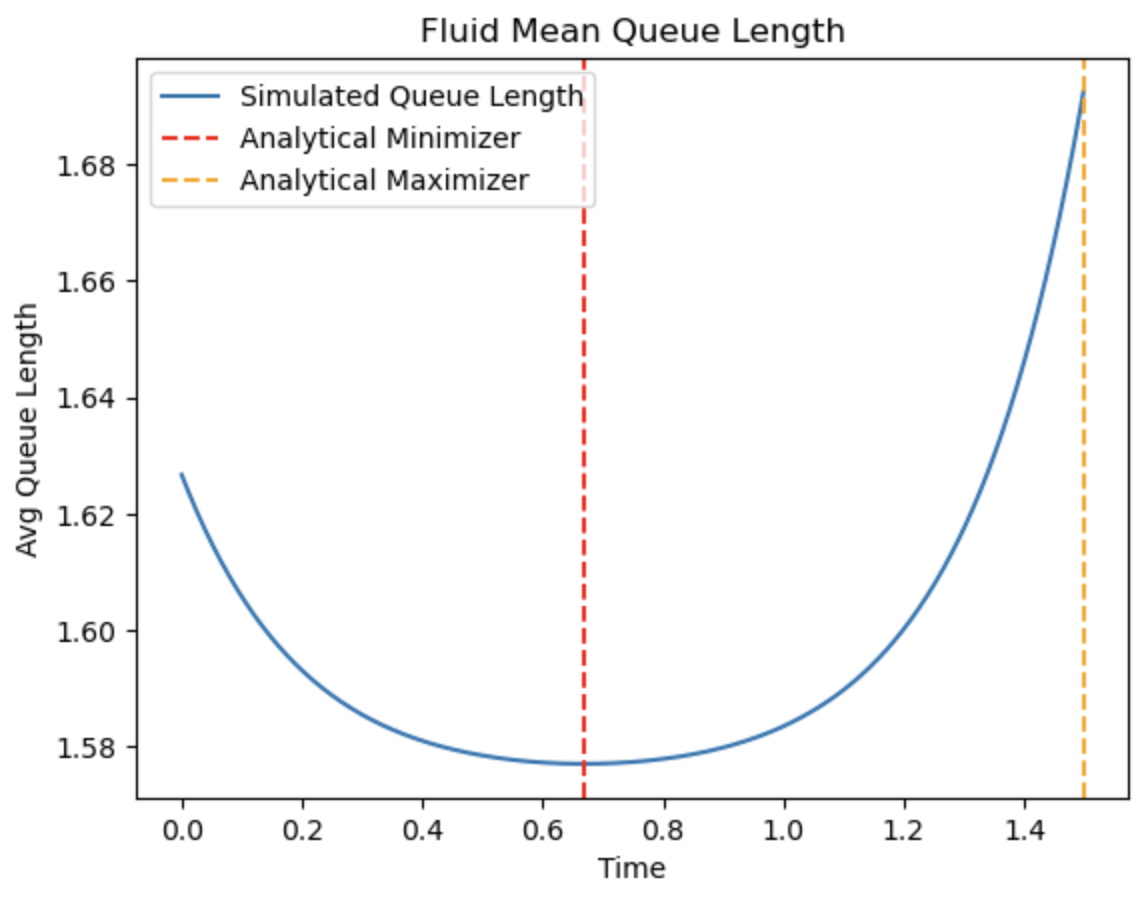}
        \caption{}
        \label{fig:4a}
    \end{subfigure}
    \hfill
    \begin{subfigure}[b]{0.45\textwidth}
        \centering
        \includegraphics[width=\textwidth]{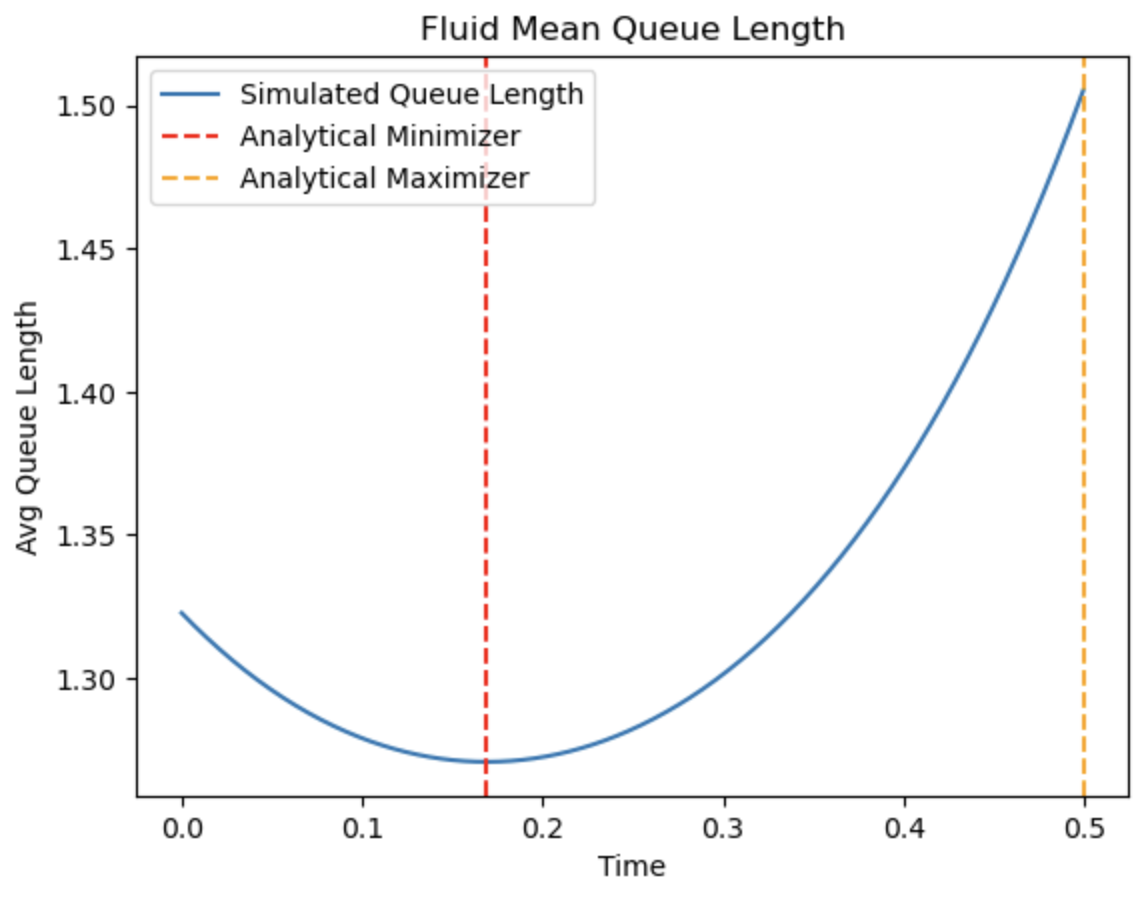}
        \caption{}
        \label{fig:4b}
    \end{subfigure}
    \caption{Fluid Mean of Underload System (starts below capacity).  \textbf{(a). }$\lambda = 9, \mu = 5, b = 0.5, T = 1.5, q_0 = 1, \theta = 2, c = 2$. \textbf{(b). }$\lambda = 9, \mu = 5, b = 0.5, T = 0.5, q_0 = 1, \theta = 2, c = 2$.}
    \label{fig:main_figure}
\end{figure}

\section{Conclusion}\label{secConc}

This paper presents an analysis of impulsive differential equation models applied to queueing systems, with a particular focus on the Erlang-A model. We dive deep and analyzed the system behavior changes corresponding to impulsive control modeled as sudden state changes. Our key contributions include derivation of closed-form expressions for steady-state amplitudes, average queue lengths, and optimal impulse timing (single impulse scenario) under various regimes.

One of the central insights of our work is the identification and quantification of oscillatory behavior in queue lengths, analogous to the charge-discharge cycles of a capacitor. We demonstrate that under certain stability conditions, these oscillations converge to steady patterns, and we quantify the amplitude and average behavior of such dynamics. Furthermore, we solve the inverse optimization problem of determining the optimal time to apply an impulse that either minimizes or maximizes the average queue length over a fixed interval. In the linear case, we provide explicit solutions for the optimal impulse time. For the more complex Erlang-A setting, we characterize several regimes analytically and identify analytical solutions for some of the regimes while proving that numerical methods are necessary for the rest.

While this paper lays the groundwork for understanding oscillatory dynamics and impulse design in impulsive queueing systems, several promising avenues remain for future research. A natural direction to pursue is the multi-impulse design, where multiple impulses instead of one are applied over the time frame $[0,T]$. While the single-impulse case provides valuable insights and tractable closed-form results in certain regimes, generalizing to multiple impulses introduces several challenges. First, the analytical complexity increases significantly. Due to the nature of Erlang-A queues, each impulse introduces multiple scenarios according to the queue length after impulse. Closed-form solutions may no longer be feasible except in highly simplified or specific settings, necessitating robust numerical algorithms or approximation techniques. Moreover, because the effect of each impulse is multiplicative, the Hessian of the cost function is likely going to have mixed signs, making the problem of optimizing multiple impulse times likely non-convex and possible to have multiple local optima.

Another aspect of interest is more non-linear drift model where the dynamics of queue evolution is not linear. For example the following equation is an interesting one to analyze,
$$\shortdot{q}(t) = \lambda - \mu q(t)^{\alpha}, \alpha \neq 1.  $$
The nonlinearity parameter $\alpha$ fundamentally alters how the queue length decays between impulses. For example, for $\alpha > 1$, the departure rate increases superlinearly with the queue length, allowing the system to recover more rapidly from large deviations. We are interested in learning how $\alpha$ affects the optimal time of impulse application. Exploring these directions would deepen our understanding of random impulse, and more specifically how nonlinear system characteristics and multiple interventions influence queue dynamics. And they stand as promising lines of inquiry for future theoretical and practical work.

\section*{Acknowledgements}
Jamol Pender is grateful to the organizers of the Joint Math Meetings Session  (Women in Math)  for introducing him to the topic of impulsive differential equations.  

\pagebreak

\appendix
\section{Proof of Theorem \ref{thm:erlang}}
In all the subsections, $\tau_L^*$ will be used to denote the minimizer on the corresponding intervals of each subsection.
\subsection{\text{$q_0 > c, \lambda > \mu c$}} \label{apdx1.1}
We extend our notation from the linear case and let:
\begin{enumerate}
    \item $\tau$ denote the time where the impulse takes place;
    \item $\xi_1$ be the stationary mean of the system described by Equation \ref{fldmean} given $q_0 > c$. i.e. $$\xi_1 = \frac{\lambda - \mu c + \theta c}{\theta}.$$
    \item $\xi_2$ be the stationary mean of the system described by Equation \ref{fldmean} given $q_0 \leq c$. i.e. $$\xi_2 = \frac{\lambda}{\mu}.$$
\end{enumerate}

We first realize that for different values of $\tau$, the analytical solution of $q_2(t)$ differs, according to whether $b \cdot q_1(\tau) > c$ or $b \cdot q_1(\tau) \leq c$. The intervals that correspond to the above two scenarios are calculated as follows:

For $b \cdot q_1(\tau) > c$,
\begin{align*}
    (q_0 - \xi_1) e^{-\theta t_1} > \frac{c}{b} - \xi_1 \\
    \ln{(q_0 - \xi_1)} - \theta t_1 > \ln{\left(\frac{c}{b} - \xi_1\right)} \\
    t_1 < \ln \left( \frac{\ln \left( \frac{q_0 - \xi_1}{c/b - \xi_1} \right)}{\theta} \right) \quad \quad \text{if } q_0 > \xi_1 \\
    t_1 > \ln \left( \frac{\ln \left( \frac{q_0 - \xi_1}{c/b - \xi_1} \right)}{\theta} \right) \quad \quad \text{if } q_0 < \xi_1.
\end{align*}
Hence,
$$\begin{cases}
    b \cdot q_1(\tau) > c &\text{when } t \in \left[ 0,\ln \left( \frac{\ln \left( \frac{q_0 - \xi_1}{c/b - \xi_1} \right)}{\theta} \right) \right] \text{ and }q_0 > \xi_1\\
    &\text{or when } t \in \left[ \ln \left( \frac{\ln \left( \frac{q_0 - \xi_1}{c/b - \xi_1} \right)}{\theta} \right),T \right] \text{ and }q_0 \leq \xi_1\\
    b \cdot q_1(\tau) \leq c &\text{when } t \in \left[ 0,\ln \left( \frac{\ln \left( \frac{q_0 - \xi_1}{c/b - \xi_1} \right)}{\theta} \right) \right] \text{ and }q_0 \leq \xi_1\\
    &\text{or when } t \in \left[ \ln \left( \frac{\ln \left( \frac{q_0 - \xi_1}{c/b - \xi_1} \right)}{\theta} \right),T \right] \text{ and }q_0 > \xi_1
\end{cases}$$

We investigate each case separately:
\subsubsection{$b \cdot q_1(\tau) > c$}
Now we write down the solution for our differential equation in two pieces:
\begin{align*}
    q_1(t) &= \xi_1 + (q_0 - \xi_1) e^{-\theta t}  &\text{ for } 0 \leq t \leq \tau \\
    q_2(t) &= \xi_1 + \bigg( b\xi_1 + b(q_0-\xi_1)e^{-\theta \tau} - \xi_1 \bigg) e^{-\theta(t-\tau)} \\
    &= \xi_1 + (b-1) \xi_1 \cdot e^{-\theta(t-t_1)} + b(q_0-\xi_1)e^{-\theta t}  &\text{ for } \tau < t \leq T
\end{align*}

By Leibniz rule,
\begin{align*} 
    \frac{d}{d\tau} \int_0^{\tau} q_1(t) dt &= q_1(\tau)\\
    &= \xi_1 + (q_0 - \xi_1) e^{-\theta \tau} \\
    \frac{d}{d\tau} \int_{\tau}^T q_2(t, \tau) dt &= - \frac{d}{d\tau} \int_T^{\tau} q_2(t, \tau) dt \\
    &= - q_2(\tau, \tau) - \int_T^{\tau} \frac{\partial}{\partial \tau} q_2(t, \tau) dt\\
    &= -b \xi_1 - b(q_0-\xi_1)e^{-\theta \tau} - \int_T^{\tau}  (b-1) \xi_1 \theta e^{-\theta (t-\tau)} dt\\
    &= -b \xi_1 - b(q_0-\xi_1)e^{-\theta \tau} + (b-1) \xi_1 - (b-1) \xi_1  e^{-\theta (T-\tau)}\\
    &= -\xi_1 - b(q_0-\xi_1)e^{-\theta \tau} - (b-1) \xi_1  e^{-\theta (T-\tau)}
\end{align*}

Now we discuss two cases:

(i) When $q_0 > \xi_1$, we can see that the derivative of the total queue length, being $$\frac{d}{d\tau} \int_0^{\tau} q_1(t) dt +\frac{d}{d\tau} \int_{\tau}^T q_2(t, \tau) dt = (1-b)(q_0-\xi_1)e^{-\theta \tau} - (b-1) \xi_1  e^{-\theta (T-\tau)},$$
is strictly positive. Recall that when $q_0 > \xi_1$, $\tau \in \left[0, \ln \left( \frac{\ln \left( \frac{q_0 - \xi_1}{c/b - \xi_1} \right)}{\theta} \right) \right]$, we have:
$$\underset{\tau}{\mathrm{argmin}}_{\tau} \int_0^t q(t) dt = 0,$$
$$\underset{\tau}{\mathrm{argmax}}_{\tau} \int_0^t q(t) dt = \min \left( T, \ln \left( \frac{\ln \left( \frac{q_0 - \xi_1}{c/b - \xi_1} \right)}{\theta} \right) \right).$$

(ii) When $q_0 \leq \xi_1$, we can see that the derivative of the total queue length, being $$\frac{d}{d\tau} \int_0^{\tau} q_1(t) dt +\frac{d}{d\tau} \int_{\tau}^T q_2(t, \tau) dt = (1-b)(q_0-\xi_1)e^{-\theta \tau} - (b-1) \xi_1  e^{-\theta (T-\tau)},$$
has a solution when set to 0.
\begin{align*}
    (1-b)(q_0-\xi_1)e^{-\theta \tau} = (b-1) \xi_1  e^{-\theta (T-\tau)} \\
    \ln(\xi_1 - q_0) - \theta \tau = \ln(\xi_1) - \theta T + \theta \tau \\
    \tau^{OPT} = \frac{\ln(\xi_1-q_0)-\ln(\xi_1)}{2 \theta}+ \frac{T}{2}
\end{align*}
Once again recall that  when $q_0 < \xi_1, \tau \in \left[ \ln \left( \frac{\ln \left( \frac{q_0 - \xi_1}{c/b - \xi_1} \right)}{\theta} \right),T\right]$, 
$$\underset{\tau}{\mathrm{argmin}}_{\tau} \int_0^t q(t) dt =  \max\left( \tau^{OPT},  \ln \left( \frac{\ln \left( \frac{q_0 - \xi_1}{c/b - \xi_1} \right)}{\theta} \right)\right),$$
$$\underset{\tau}{\mathrm{argmax}}_{\tau} \int_0^t q(t) dt = T$$

\subsubsection{$b \cdot q_1(\tau) < c$} \label{apdx: poly}
We now analyze the situation where the jump occurs on the other half of the interval $[0,T]$. Since $b \cdot q_1(\tau) < c$, we should discuss first, the case where $t_{2,q_1(\tau)}^* > T-\tau$ and then, the case where $t_{2,q_1(\tau)}^* < T-\tau$. 

(i) When $\tau > T - t_{2,q_1(\tau)}^*$,
\begin{align*}
    q_1(t) &= \xi_1 + (q_0 - \xi_1) e^{-\theta t}  &\text{ for } 0 \leq t \leq \tau \\
    q_2(t) &= \xi_2 + \bigg( b\xi_1 + b(q_0-\xi_1)e^{-\theta \tau} - \xi_2 \bigg) e^{-\mu(t-\tau)} &\text{ for } \tau < t \leq T
\end{align*}

By the Leibniz rule,

\begin{align*} 
    \frac{d}{d\tau} \int_0^{\tau} q_1(t) dt &= q_1(\tau) = \xi_1 + (q_0 - \xi_1) e^{-\theta \tau} \\
    \frac{d}{d\tau} \int_{\tau}^T q_2(t, \tau) dt &= - \frac{d}{d\tau} \int_T^{\tau} q_2(t, \tau) dt \\
    &= - q_2(\tau, \tau) - \int_T^{\tau} \frac{\partial}{\partial \tau} q_2(t, \tau) dt\\
    &= - \xi_2 - b\xi_1 + \xi_2 - b(q_0 - \xi_1) e^{-\theta \tau}\\
    &- \int_T^{\tau} (b \xi_1 - \xi_2) \mu e^{-\mu (t-\tau)} + b(q_0 - \xi_1) e^{-\mu t} (\mu - \theta) e^{(\mu-\theta) \tau} dt\\
    &= -b\xi_1 - b(q_0 - \xi_1) e^{-\theta \tau} + (b\xi_1 - \xi_2) - (b\xi_1 - \xi_2)e^{-\mu(T-\tau)}\\
    &+ \frac{\mu-\theta}{\mu} b(q_0 - \xi_1) e^{-\theta \tau} -\frac{\mu-\theta}{\mu} b(q_0 - \xi_1) e^{(\mu - \theta)\tau - \mu T} \\
    &= -\xi_2-\frac{\theta}{\mu} b(q_0-\xi_1)e^{-\theta \tau}- (b\xi_1 - \xi_2)e^{-\mu(T-\tau)}-\frac{\mu-\theta}{\mu} b(q_0 - \xi_1) e^{(\mu - \theta)\tau - \mu T}
\end{align*}
When one substitute: $e^{-\theta \tau}$ with $x$; $-\frac{\mu}{\theta}$ with $m$; $\xi_1-\xi_2$ with $b$; $(1-\frac{\theta}{\mu}) b(q_0-\xi_1)$ with $b$; $-(b\xi_1 - \xi_2)e^{-\mu T}$ with $c'$; and $-\frac{\mu-\theta}{\mu} b(q_0 - \xi_1) e^{- \mu T}$ with $d$, one can realize that the expression for $\frac{d}{d\tau} \int_0^{\tau} q_1(t) dt + \frac{d}{d\tau} \int_{\tau}^T q_2(t, \tau) dt$ is actually the polynomial $a+bx-cx^m-dx^{m+1}$. By Abel-Ruffini theorem, there is no solution in radicals to general polynomial equations of degree five or higher with arbitrary coefficients, but the existence of a solution is guaranteed. By simple convexity analysis, the solution is the minimizer $\tau_L^*$ while $\tau_U^* = T$ results in an average queue length of $\int_0^T \xi_1 + (q_0-\xi_1)e^{-\theta t} dt$ that dominates all cases with impulses in $[0,T)$. Intuitively, this is equivalent to no impulse within the interval and hence the queue length never drops and remains at its maximum.

(ii) When $\tau \leq T - t_{2,q_1(\tau)}^*$,
In this case, the time horizon allows the system to reach capacity after impulse. The solution to Equation \ref{abeqn1} become:
\begin{align*}
    q_1(t) &= \xi_1 + (q_0 - \xi_1) e^{-\theta t}  &\text{ for } 0 \leq t \leq \tau \\
    q_2(t) &= \xi_2 + \bigg( bq_1(\tau) - \xi_2 \bigg) e^{-\mu(t-\tau)} &\text{ for } \tau < t \leq \tau+t_{2,bq_1(\tau)}^*\\
    q_3(t) &= \xi_1 + (c-\xi_1) e^{-\theta(t-\tau-t_{2,bq_1(\tau)}^*)} &\text{ for } \tau+t_{2,bq_1(\tau)}^* < t \leq T
\end{align*}

The total queue length corresponding to the three parts are:
\begin{align}
    \int_0^\tau q_1(t) dt &= \xi_1 \tau + \frac{q_0-\xi_1}{\theta}(1-e^{-\theta\tau}) \nonumber\\
    \int_\tau^{\tau+t_{2,bq_1(\tau)}^*} q_2(t) dt &= \xi_2 \cdot t_{2,bq_1(\tau)}^* + \frac{bq_1(\tau)-\xi_2}{\mu} (1-e^{-\mu \cdot t_{2,bq_1(\tau)}^*}) \nonumber \\
    &= \xi_2 \cdot t_{2,bq_1(\tau)}^* - \frac{c-\xi_2}{\mu} + \frac{b\xi_1 + b(q_0-\xi_1)e^{-\theta \tau} - \xi_2}{\mu} \label{eqn: reduction}\\
    \int_{\tau+t_{2,bq_1(\tau)}^*}^T q_3(t) dt &= \xi_1(T- t_{2,bq_1(\tau)}^*-\tau) + \frac{c-\xi_1}{\theta}(1-e^{-\theta(T- t_{2,bq_1(\tau)}^*-\tau)}) \nonumber
\end{align}
Here, the reason behind taking the extra step of computing the integral without using the Leibniz rule directly lies in Equation \ref{eqn: reduction}. Due to the definition of $t_{2,bq_1(\tau)}^*$, we are able to simplify the expression which would be much harder if we directly apply Leibniz rule. Hence,
\begin{align*}
    \frac{d}{d\tau} \int_0^\tau q_1(t) dt &= \xi_1 + (q_0 - \xi_1) e^{-\theta \tau} \\
    \frac{d}{d\tau} \int_\tau^{\tau+t_{2,bq_1(\tau)}^*} q_2(t) dt &= \xi_2 \cdot (t_{2,bq_1(\tau)}^*)' - \frac{\theta}{\mu}b(q_0-\xi_1)e^{-\theta\tau}\\
    \frac{d}{d\tau}\int_{\tau+t_{2,bq_1(\tau)}^*}^T q_3(t) dt &= -\xi_1 (t_{2,bq_1(\tau)}^*)'-\xi_1 - (\xi_1-c) e^{-\theta(T- t_{2,bq_1(\tau)}^*-\tau)} (1+(t_{2,bq_1(\tau)}^*)')\\
    \frac{d}{d\tau} \int_0^T q(t) dt &= \frac{d}{d\tau} T \cdot J(\tau) \\
    &=\frac{d}{d\tau} \int_0^\tau q_1(t) dt + \frac{d}{d\tau} \int_\tau^{\tau+t_{2,bq_1(\tau)}^*} q_2(t) dt + \frac{d}{d\tau}\int_{\tau+t_{2,bq_1(\tau)}^*}^T q_3(t) dt\\
    &= (\xi_2-\xi_1)(t_{2,bq_1(\tau)}^*)' + \left( 1- \frac{b \theta}{\mu} \right) (q_0-\xi_1)e^{-\theta \tau} \\
    &- (c-\xi_1) (1+(t_{2,bq_1(\tau)}^*)')e^{-\theta(T- t_{2,bq_1(\tau)}^*-\tau)}
\end{align*}
where
\begin{align}
    (t_{2,bq_1(\tau)}^*)' = \frac{\theta}{\mu} \frac{-b(q_0-\xi_1)}{bq_1(\tau)-\xi_2} e^{-\theta \tau} \label{eqn: num}
\end{align}
Due to the interactive nature between $\lambda$, $\mu$, $\theta$, and $c$ in this section, we are not able to determine the monotonicity of the general $\frac{d}{d\tau} \int_0^T q(t) dt$ and numerical solution is hence needed to solve Equation \ref{eqn: num}. (Although an analytical solution is theoretically attainable by partitioning the problem into an extensive number of sub-cases, the sheer number and complexity of these sub-cases render such an approach unrealistic. Therefore, a numerical solution is preferred for both tractability and efficiency.)
\subsection{\text{$q_0 \leq c, \lambda > \mu c$}}
We first analyze the interval division as in the previous section:
\begin{align*}
    \xi_2 + (q_0-\xi_2)e^{-\mu \tau} < \frac{c}{b}\\
    e^{-\mu \tau}  > \frac{c/b-\xi_2}{q_0-\xi_2}\\
    -\mu\tau > \ln\left( \frac{c/b-\xi_2}{q_0-\xi_2}\right)\\
    \tau < \frac{\ln\left(\frac{q_0-\xi_2}{c/b-\xi_2}\right)}{\mu}
\end{align*}
Note that there are no need to divide into sub-cases as in the previous section because of the assumption of this section $\xi_2 = \frac{\lambda}{\mu} > c > q_0$. Here, the interval $\left[0,t_{2,q_0}^*\right]$ corresponds to the case where the system is below capacity, i.e. $q_1(t) \leq c$ and hence the system after impulse can only be below capacity ($bq_1(\tau) \leq c$). We then first consider the scenario where the time horizon is long enough. In this case, $T$ supports recovery to the capacity $c$ after impulse. Then, because of the nature of the Erlang-A system, the interval $\left[\frac{\ln\left(\frac{q_0-\xi_2}{c/b-\xi_2}\right)}{\mu},T\right]$ is further divided into $\left[\frac{\ln\left(\frac{q_0-\xi_2}{c/b-\xi_2}\right)}{\mu}, t_{2,q_0}^*\right]$, and $\left[t_{2,q_0}^*, T\right]$. Then, in the scenario where $T \in \left[t_{2,q_0}^*,\frac{\ln \left( \frac{b(q_0-\xi_1)}{c-b\xi_1} \right)}{\theta} \right]$, we have the intervals $[0, t_{2,q_0}^*]$ and $ [t_{2,q_0}^*,T]$. Finally for $T\in[0,t_{2,q_0}^*]$, We only have one interval $[0,T].$

\subsubsection{$\tau \in I_{2,1} = \left[0,t_{2,q_0}^*\right]$} \label{apdx: 2.1}
This is the common case for $T > t_{2,q_0}^*$. Assembling the analysis in the previous sections, we first write down the solution to Equation \ref{abeqn1}:
\begin{align*}
    q_1(t) &= \xi_2 + (q_0 - \xi_2) e^{-\mu t}  &\text{ for } 0 \leq t \leq \tau \\
    q_2(t) &= \xi_2 + \bigg( bq_1(\tau) - \xi_2 \bigg) e^{-\mu(t-\tau)} &\text{ for } \tau < t \leq \tau+t_{2,bq_1(\tau)}^*\\
    q_3(t) &= \xi_1 + (c-\xi_1) e^{-\theta(t-\tau-t_{2,bq_1(\tau)}^*)} &\text{ for } \tau+t_{2,bq_1(\tau)}^* < t \leq T
\end{align*}
The total queue length corresponding to the three parts are:
\begin{align}
    \int_0^\tau q_1(t) dt &= \xi_2 \tau + \frac{q_0-\xi_2}{\mu}(1-e^{-\mu\tau}) \nonumber\\
    \int_\tau^{\tau+t_{2,bq_1(\tau)}^*} q_2(t) dt &= \xi_2 \cdot t_{2,q_0}^* + \frac{bq_1(\tau)-\xi_2}{\mu} (1-e^{-\mu \cdot t_{2,bq_1(\tau)}^*}) \nonumber \\
    &= \xi_2 \cdot t_{2,bq_1(\tau)}^* - \frac{c-\xi_2}{\mu} + \frac{b\xi_2 + b(q_0-\xi_2)e^{-\mu \tau} - \xi_2}{\mu} \\
    \int_{\tau+t_{2,bq_1(\tau)}^**}^T q_3(t) dt &= \xi_1(T- t_{2,bq_1(\tau)}^*-\tau) + \frac{c-\xi_1}{\theta}(1-e^{-\theta(T- t_{2,bq_1(\tau)}^*-\tau)})
\end{align}

Hence, taking derivative gives:
\begin{align*}
    \frac{d}{d\tau} \int_0^\tau q_1(t) dt &= \xi_2 + (q_0 - \xi_2) e^{-\mu \tau} \\
    \frac{d}{d\tau} \int_\tau^{\tau+t_{2,bq_1(\tau)}^*} q_2(t) dt &= \xi_2 \cdot (t_{2,bq_1(\tau)}^*)' - b(q_0-\xi_2)e^{-\mu\tau}\\
    \frac{d}{d\tau}\int_{\tau+t_{2,bq_1(\tau)}^*}^T q_3(t) dt &= -\xi_1 (1+(t_{2,bq_1(\tau)}^*)') - (c-\xi_1) e^{-\theta(T- t_{2,bq_1(\tau)}^*-\tau)} (1+(t_{2,bq_1(\tau)}^*)')\\
    \frac{d}{d\tau} \int_0^T q(t) dt &= \frac{d}{d\tau} T \cdot J(\tau) \\
    &=\frac{d}{d\tau} \int_0^\tau q_1(t) dt + \frac{d}{d\tau} \int_\tau^{\tau+t_{2,bq_1(\tau)}^*} q_2(t) dt + \frac{d}{d\tau}\int_{\tau+t_{2,bq_1(\tau)}^*}^T q_3(t) dt\\
    &= (\xi_2-\xi_1)(1+(t_{2,bq_1(\tau)}^*)') + \left( 1- b \right) (q_0-\xi_1)e^{-\mu \tau} - (c-\xi_1) (1+(t_{2,bq_1(\tau)}^*)')e^{-\theta(T- t_{2,bq_1(\tau)}^*-\tau)}
\end{align*}
where
\begin{align}
    (t_{2,q_0}^*)' = \frac{-b(q_0-\xi_1)}{bq_1(\tau)-\xi_2} e^{-\mu \tau} 
\end{align}
Furthermore, since $b<1, bq_1(\tau) <c< \xi_2$, we notice that
\begin{align*}
    1+(t_{2,q_0}^*)' = \frac{(b-1)\xi_2}{bq_1(\tau)-\xi_2} > 0.
\end{align*}
And since $\lambda>\mu c$, it follows that $c-\xi_1 < 0$, and consequently $\left( 1- b \right) (q_0-\xi_1) < 0$, $-(c-\xi_1) (1+(t_{2,q_0}^*)') > 0$. This implies that the derivative $$\frac{d}{d\tau} \int_0^T q(t) dt$$ increases in value as $\tau$ increases. It is easy to verify that the derivative at $\tau = \frac{\ln\left( \frac{c/b-\xi_2}{q_0-\xi_2}\right)}{\mu}$ is negative, which suggests that $\frac{d}{d\tau} \int_0^T q(t) dt < 0$ for all $\tau \in \left[0,t_{2,q_0}^*\right]$. The optimizers $\tau_L^*,\tau_U^*$ on this interval are hence $$\tau_{L}^* =t_{2,q_0}^*, \quad \quad \tau_U^* = 0.$$

\subsubsection{$\tau \in I_{2,2} = \left[t_{2,q_0}^*,\frac{\ln \left( \frac{b(q_0-\xi_1)}{c-b\xi_1} \right)}{\theta}\right]$ with $T > \frac{\ln \left( \frac{b(q_0-\xi_1)}{c-b\xi_1} \right)}{\theta}$}
The solution to Equation \ref{abeqn1} in this case is:
\begin{align*}
    q_1(t)& = \xi_2 + (q_0 - \xi_2) e^{-\mu t}&\text{ for } 0 \leq t < t_{2,q_0}^*\\
    q_2(t)& = \xi_1 + (c - \xi_1) e^{-\theta (\tau - t_{2,q_0}^*)} &\text{ for } t_{2,q_0}^* \leq t < \tau\\
    q_3(t)& =  \xi_2 + \left( b \xi_1 + b ( c - \xi_1) e^{-\theta (\tau - t_{2,q_0}^*)} - \xi_2 \right) e^{-\mu (t - \tau)} &\text{ for } \tau \leq t < \tau + t_{2,bq_2(\tau)}^{*}\\
    q_4(t) &= \xi_1 + (c - \xi_1) e^{-\theta (t - \tau - t_{2,bq_2(\tau)}^{*})}&\text{ for } \tau + t_{2,bq_2(\tau)}^{*} \leq t \leq T
\end{align*}

Taking integral to compute the four components of $T\cdot J(\tau)$:
\begin{align*}
    \int_0^{t_{2,q_0}^*} q_1(t) dt &= \xi_2 t_{2,q_0}^* + (q_0 - \xi_2)(1 - e^{-\mu t_{2,q_0}^*})\\
    \int_{t_{2,q_0}^*}^\tau q_2(t) dt &= \xi_1 (\tau - t_{2,q_0}^*) + \frac{c - \xi_1}{\theta} \left( 1 - e^{-\theta (\tau - t_{2,q_0}^*)} \right)\\
    \int_\tau^{\tau + t_{2,bq_2(\tau)}^{*}} q_3(t) dt &= \xi_2 \cdot t_{2,bq_2(\tau)}^{*} + \frac{bq_2(\tau) - \xi_2}{\mu} (1 - e^{-\mu \cdot t_{2,bq_2(\tau)}^{*}})\\
    &=  \xi_2 \cdot t_{2,bq_2(\tau)}^{*}  + \frac{bq_2(\tau) - \xi_2}{\mu} - \frac{c - \xi_2}{\mu}\\
    \int_{\tau + t_{2,bq_2(\tau)}^{*}}^T q_4(t) dt &= \xi_1 (T - \tau-t_{2,bq_2(\tau)}^{*}) + \frac{c - \xi_1}{\theta} \left( 1 - e^{-\theta (T - \tau -t_{2,bq_2(\tau)}^{*})} \right)
\end{align*}

Taking derivatives gives:
\begin{align*}
\frac{d }{d \tau} \int_0^{t_{2,q_0}^*} q_1(t) dt&= 0 \\
\frac{d }{d \tau}\int_{t_{2,q_0}^*}^\tau q_2(t) dt &= \xi_1 + (c - \xi_1) e^{-\theta (\tau - t_{2,q_0}^*)} \\
\frac{d }{d \tau}\int_\tau^{\tau + t_{2,bq_2(\tau)}^{*}} q_3(t) dt &= \xi_2 (t_{2,bq_2(\tau)}^{*})' - \frac{\theta}{\mu} b (c - \xi_1) e^{-\theta (\tau - t_{2,q_0}^*)} \\
\frac{d }{d \tau}\int_{\tau + t_{2,bq_2(\tau)}^{*}}^T q_4(t) dt  &= -(c - \xi_1)(t_{2,bq_2(\tau)}^{*})' e^{-\theta (T - \tau - t_{2,bq_2(\tau)}^{*})} - \xi_1 (t_{2,bq_2(\tau)}^{*})'\\
\frac{d}{d\tau} T \cdot J(\tau) &= \xi_1 + (\xi_2-\xi_1)(t_{2,bq_2(\tau)}^{*})' \\
&+ \left( \frac{\theta b}{\mu} - 1 \right) (\xi_1-c) e^{-\theta(\tau-t_{2,bq_2(\tau)}^{*})} + (\xi_1-c)(t_{2,bq_2(\tau)}^{*})'e^{-\theta (T - \tau - t_{2,bq_2(\tau)}^{*})}
\end{align*}
where 
\begin{align*}
    (t_{2,bq_2(\tau)}^{*})' &= \frac{c-\xi_2}{\mu(bq_2(\tau)-\xi_2)} \frac{1}{c-\xi_2} q_0'\\
    &= -\frac{\theta}{\mu} \frac{b(c-\xi_1)}{bq_2(\tau)-\xi_2} e^{-\theta(\tau-t_{2,bq_2(\tau)}^{*})}.
\end{align*}
This is another case demonstrating the difficulty imposed on analytical solution derivation by the piecewise nature of the system's dynamics. More specifically, in this case, just like in Section \ref{apdx: poly}, each segment converges to a different steady state, manifesting in the above expressions as the simultaneous appearance of $\xi_1$ and $\xi_2$, $\mu$ and $\theta$. Moreover, the number of possible combinations of relationship among $\lambda$, $\mu$, $\theta$, and $c$ renders a general closed form analytical solution unrealistic and a numerical solution is the most efficient. As for the analysis of $\tau_U^*$, the reasoning follows from Section \ref{apdx1.1} that $\tau_U^* = T$ results in an average queue length of $\int_0^T \xi_2 + (q_0-\xi_2)e^{-\mu t} dt$ that dominates all cases with impulses in $[0,T)$.

\subsubsection{$\tau \in I_{2,3} =  \left[\frac{\ln \left( \frac{b(q_0-\xi_1)}{c-b\xi_1} \right)}{\theta},T\right]$ with $T > \frac{\ln \left( \frac{b(q_0-\xi_1)}{c-b\xi_1} \right)}{\theta}$}

The solution to Equation \ref{abeqn1} in this case is:
\begin{align*}
    q_1(t)& = \xi_2 + (q_0 - \xi_2) e^{-\mu t}&\text{ for } 0 \leq t < t_{2,q_0}^*\\
    q_2(t)& = \xi_1 + (c - \xi_1) e^{-\theta (\tau - t_{2,q_0}^*)} &\text{ for } t_{2,q_0}^* \leq t < \tau\\
    q_3(t)& =  \xi_1 + \left( b \xi_1 + b ( c - \xi_1) e^{-\theta (\tau - t_{2,q_0}^*)} - \xi_1 \right) e^{-\theta (t - \tau)} &\text{ for } \tau \leq t \leq T
\end{align*}

Taking integral to compute the four components of $T\cdot J(\tau)$:
\begin{align*}
    \int_0^{t_{2,q_0}^*} q_1(t) dt &= \xi_2 t_{2,q_0}^* + \frac{q_0 - \xi_2}{\mu}(1 - e^{-\mu t_{2,q_0}^*})\\
    \int_{t_{2,q_0}^*}^\tau q_2(t) dt &= \xi_1 (\tau - t_{2,q_0}^*) + \frac{c - \xi_1}{\theta} \left( 1 - e^{-\theta (\tau - t_{2,q_0}^*)} \right)\\
    \int_\tau^{\tau + t_{2,bq_2(\tau)}^{*}} q_3(t) dt &= \xi_1(T-\tau)+(b-1)\xi_1 \cdot (1-e^{-\theta(T-\tau)}) + \frac{b(c-\xi_1)}{\theta}(e^{-\theta(\tau-t_{2,bq_2(\tau)}^{*})}-e^{-\theta(T-t_{2,bq_2(\tau)}^{*})}).
\end{align*}

Taking derivatives gives:
\begin{align*}
\frac{d }{d \tau} \int_0^{t_{2,q_0}^*} q_1(t) dt&= 0 \\
\frac{d }{d \tau}\int_{t_{2,q_0}^*}^\tau q_2(t) dt &= \xi_1 + (c - \xi_1) e^{-\theta (\tau - t_{2,q_0}^*)} \\
\frac{d }{d \tau}\int_\tau^T q_3(t) dt &= -\xi_1 - (b-1) \xi_1 e^{-\theta(T-\tau)} - b(c-\xi_1)e^{-\theta(\tau-t_{2,bq_2(\tau)}^{*})}\\
\frac{d}{d\tau} T \cdot J(\tau) &= (1-b)(c-\xi_1) e^{-\theta(\tau-t_{2,bq_2(\tau)}^{*})} + (1-b)\xi_1 e^{-\theta(T-\tau)}
\end{align*}

Notice that $(1-b)(c-\xi_1) < 0$ and $(q-b)\xi_1 > 0$, so $\frac{d}{d\tau} T \cdot J(\tau) = 0$ has a solution:
\begin{align*}
    -(c-\xi_1) e^{-\theta(\tau-t_{2,bq_2(\tau)}^{*})} =\xi_1 e^{-\theta(T-\tau)}\\
    \ln(\xi_1-c) -\theta\tau + \theta t_{2,bq_2(\tau)}^{*} = \ln(\xi_1) -\theta T+\theta\tau\\
    \tau^{OPT} = \frac{T+t_{2,bq_2(\tau)}^{*}}{2} + \frac{\ln\left(\frac{\xi_1-c}{\xi_1}\right)}{2\theta}
\end{align*}
Simple convexity analysis guarantees that $\tau_L^* = \tau^{OPT}$. As for the analysis of $\tau_U^*$, the reasoning follows from Section \ref{apdx1.1} that $\tau_U^* = T$ results in an average queue length of $\int_0^T \xi_2 + (q_0-\xi_2)e^{-\mu t} dt$ that dominates all cases with impulses in $[0,T)$.

\subsubsection{$\tau \in I_{2,4} =  [t_{2,q_0}^*,T]$ with $T \in \left[t_{2,q_0}^*,\frac{\ln \left( \frac{b(q_0-\xi_1)}{c-b\xi_1} \right)}{\theta}\right]$}
This section corresponds to the case where the system reaches capacity before impulse but isn't capable of reviving back to capacity due to time horizon. The solution to Equation \ref{abeqn1} becomes:
\begin{align*}
    q_1(t) &= \xi_2 + (q_0 - \xi_2) e^{-\mu t}  &\text{ for } 0 \leq t \leq t_{2,q_0}^* \\
    q_2(t) &= \xi_1 + (c-\xi_1) e^{-\theta(t-t_{2,q_0}^*)} &\text{ for } t_{2,q_0}^* < t \leq \tau\\
    q_3(t) &= \xi_2 + (bq_2(\tau)-\xi_2) e^{-\theta(t-\tau} &\text{ for } \tau< t \leq T
\end{align*}
The total queue length corresponding to the three parts are:
\begin{align*}
    \int_0^{t_{2,q_0}^*} q_1(t) dt &= \xi_2 t_{2,q_0}^* + \frac{q_0-\xi_2}{\mu}(1-e^{-\mu t_{2,q_0}^*}) \nonumber\\
    \int_{t_{2,q_0}^*}^{\tau} q_2(t) dt &= \xi_1(\tau- t_{2,q_0}^*) + \frac{c-\xi_2}{\theta}(1-e^{-\theta(\tau - t_{2,q_0}^*)}\\
    \int_{\tau}^T q_3(t) dt &= \xi_2 \cdot (T-t_{2,q_0}^*) + \frac{bq_1(\tau)-\xi_2}{\mu} (1-e^{-\mu (T-\tau)}) \nonumber 
\end{align*}

Hence, taking derivative gives:
\begin{align*}
    \frac{d}{d\tau} \int_0^\tau q_1(t) dt &= 0\\
    \frac{d}{d\tau} \int_{t_{2,q_0}^*}^{\tau} q_2(t) dt &= \xi_1 + (c-\xi_2)e^{-\theta(\tau-t_{2,q_0}^*)}\\
    \frac{d}{d\tau}\int_{\tau}^T q_3(t) dt  &= -\xi_2 - \frac{\theta}{\mu}b(c-\xi_1)e^{-\theta(\tau-t_{2,q_0}^*)}-(b\xi_1-\xi_2)e^{-\mu(T-\tau)} - b(c-\xi_1)\frac{\mu-\theta}{\mu}e^{(\mu-\theta)\tau - \mu T + \theta t_{2,q_0}^*}
\end{align*}
Similar to the argument in Section \ref{apdx: poly}, this derivative has a solution and is only numerically solvable for general parameters.

\subsubsection{$\tau \in I_{2,5} =  \left[0,T\right]$ with $T \leq t_{2,q_0}^*$}

This section corresponds to the case where the system never reaches its capacity $c$. In this case,
\begin{align*}
    q_1(t) &= \xi_2 + (q_0 - \xi_2) e^{-\mu t}  &\text{ for } 0 \leq t \leq \tau \\
    q_2(t) &= \xi_2 + \bigg( bq_1(\tau) - \xi_2 \bigg) e^{-\mu(t-\tau)} \\
    &= \xi_2 + (b-1)\xi_2 e^{-\mu(t-\tau)} + b(q_0-\xi_2)e^{-\mu t}. &\text{ for } \tau < t \leq T
\end{align*}
Taking integral yields the total queue length:
\begin{align}
    \int_0^\tau q_1(t) dt &= \xi_2 \tau + \frac{q_0-\xi_2}{\mu}(1-e^{-\mu\tau}) \nonumber\\
    \int_\tau^T q_2(t) dt &= \xi_2(T-\tau) + \frac{(b-1)\xi_2}{\mu} (1- e^{-\mu(T-\tau)}) + \frac{b(q_0-\xi_1)}{\mu} (e^{-\mu \tau}-e^{-\mu T}).
\end{align}
Taking the derivatives gives:
\begin{align*}
    \frac{d}{d\tau} \int_0^\tau q_1(t) dt &= \xi_2 + (q_0 - \xi_2) e^{-\mu \tau} \\
    \frac{d}{d\tau} \int_\tau^T q_2(t) dt &= -\xi_2 - (b-1)\xi_2 e^{-\mu(T-\tau)} - b(q_0-\xi_1)e^{-\mu \tau}\\
    \frac{d}{d\tau} \int_0^T q(t) dt &= \frac{d}{d\tau} T \cdot J(\tau) \\
    &=\frac{d}{d\tau} \int_0^\tau q_1(t) dt  + \frac{d}{d\tau} \int_\tau^T q_2(t) dt\\
    &= (1-b)(q_0-\xi_1)e^{-\mu \tau} - (b-1) \xi_2e^{-\mu(T-\tau)}
\end{align*}
Observing that $ (1-b)(q_0-\xi_1) < 0$, $b-1<0$, $\frac{d}{d\tau} \int_0^T q(t) dt = 0 $ has solution:
\begin{align*}
    \ln(\xi_2-q_0)-\mu \tau = \ln(\xi_2) -\mu T + \mu \tau\\
    \tau_L^* = \frac{T}{2} + \frac{\ln(1-q_0/\xi_2)}{2\mu}
\end{align*}
with simple convexity analysis verifying that $\tau_L^*$ is indeed the minimizer.

\subsection{\text{$q_0 > c, \lambda \leq \mu c$}}

We first analyze the interval division as in the previous section. Under this section's assumptions, we have $q_0 > c \geq \xi_1$, so we can obtain:
\begin{align*}
    \xi_1 + (q_0-\xi_1)e^{-\theta \tau} > \frac{c}{b}\\
    e^{-\theta \tau}  > \frac{c/b-\xi_1}{q_0-\xi_1}\\
    -\theta\tau > \ln\left( \frac{c/b-\xi_1}{q_0-\xi_1}\right)\\
    \tau < \frac{\ln\left( \frac{q_0-\xi_1}{c/b-\xi_1}\right)}{\theta}
\end{align*}
Here, the interval $I_{3,1} = \left[0,\frac{\ln\left( \frac{q_0-\xi_1}{c/b-\xi_1}\right)}{\theta}\right]$ corresponds to the case where the system is not only above capacity, i.e. $q_1(t) > c$, but also above capacity even after impulse is applied, i.e. $b \cdot q_1(\tau) > c$. If the time horizon is not long enough for the system after jump to recover to capacity ($T \leq \frac{\ln\left( \frac{q_0-\xi_1}{c/b-\xi_1}\right)}{\theta}$), there will only be one interval $I_{3,4} = [0,T]$. On the other hand, if $T > \frac{\ln\left( \frac{q_0-\xi_1}{c/b-\xi_1}\right)}{\theta}$, the rest of the interval will be further divided into two: $I_{3,2} = \left[\frac{\ln\left( \frac{q_0-\xi_1}{c/b-\xi_1}\right)}{\theta}, t_{1,q_0}^*\right], I_{3,3} = \left[t_{1,q_0}^*, T\right]$. Note that discussions of $T \in \left[\frac{\ln\left( \frac{q_0-\xi_1}{c/b-\xi_1}\right)}{\theta}, t_{1,q_0}^*\right]$ and $T \in \left[t_{1,q_0}^*, T\right]$ are merged into one because of the structure of the underlying structure of Erlang-A that allows us to curtail the amount of computation needed. More specifically, because
\begin{align*}
 \tau_L^* = 
    \begin{cases}
       \underset{\tau}{\mathrm{argmin}}_{\tau \in \{\tau_{I_{3,1}}^*,\tau_{I_{3,2}}^*\}} \frac{1}{T} J(\tau) & \text{for }T \in \left[\frac{\ln\left( \frac{q_0-\xi_1}{c/b-\xi_1}\right)}{\theta}, t_{1,q_0}^*\right] \\
        \underset{\tau}{\mathrm{argmin}}_{\tau \in \{\tau_{I_{3,1}}^*,\tau_{I_{3,2}}^*,\tau_{I_{3,3}}^*\}} \frac{1}{T} J(\tau) & \text{for }T \in \left[t_{1,q_0}^*, T\right].
    \end{cases},
\end{align*}
we are able to merge the discussions of $T \in \left[\frac{\ln\left( \frac{q_0-\xi_1}{c/b-\xi_1}\right)}{\theta}, t_{1,q_0}^*\right]$ and $T \in \left[t_{1,q_0}^*, T\right]$ into one.

\subsubsection{$\tau \in I_{3,1} = \left[0,\frac{\ln\left( \frac{q_0-\xi_1}{c/b-\xi_1}\right)}{\theta}\right]$ with $T \geq \frac{\ln\left( \frac{q_0-\xi_1}{c/b-\xi_1}\right)}{\theta}$}
The solution to Equation \ref{abeqn1} in this case is:
\begin{align*}
    q_1(t)& = \xi_1 + (q_0 - \xi_1) e^{-\theta t}&\text{ for } 0 \leq t < \tau\\
    q_2(t)& = \xi_1 + (bq_1(\tau) - \xi_1) e^{-\theta (t-\tau)} &\text{ for } \tau \leq t < \tau + t_{1,bq_1(\tau)}^*\\
    q_3(t)& =  \xi_2 + \left( c- \xi_2 \right) e^{-\mu (t - \tau-t_{1,bq_1(\tau)}^*)} &\text{ for } \tau + t_{1,bq_1(\tau)}^* \leq t \leq T
\end{align*}

Taking integral to compute the four components of $T\cdot J(\tau)$:
\begin{align*}
    \int_0^\tau q_1(t) dt &= \xi_1\tau + \frac{q_0 - \xi_1}{\theta}(1 - e^{-\theta t})\\
    \int_\tau^{\tau + t_{1,bq_1(\tau)}^*} q_2(t) dt &= \xi_1  t_{1,bq_1(\tau)}^* + \frac{bq_1(\tau) - \xi_1}{\theta} \left( 1 - e^{-\theta  t_{1,bq_1(\tau)}^*} \right)\\
    &=\xi_1  t_{1,bq_1(\tau)}^* + \frac{bq_1(\tau) - \xi_1}{\theta} - \frac{c-\xi_1}{\theta} \\
    \int_{\tau + t_{1,bq_1(\tau)}^*}^T q_3(t) dt &= \xi_2(T - \tau-t_{1,bq_1(\tau)}^*) + \frac{c-\xi_2}{\mu} (1- e^{-\mu(T - \tau-t_{1,bq_1(\tau)}^*)}).
\end{align*}

Taking derivatives gives:
\begin{align*}
\frac{d }{d \tau}\int_0^\tau q_1(t) dt &=  \xi_1 + (q_0-\xi_1)e^{-\theta \tau}\\
\frac{d }{d \tau}\int_\tau^{\tau + t_{1,bq_1(\tau)}^*} q_2(t) dt &= \xi_1 (t_{1,bq_1(\tau)}^*)' - b(q_0-\xi_1)e^{-\theta t}\\
\frac{d }{d \tau} \int_{\tau + t_{1,bq_1(\tau)}^*}^T q_3(t) dt &= -\xi_2 (1+ (t_{1,bq_1(\tau)}^*)') + (c-\xi_2) (1+ (t_{1,bq_1(\tau)}^*)')e^{-\mu(T - \tau-t_{1,bq_1(\tau)}^*)}\\
\frac{d }{d \tau} T\cdot J(\tau) &= (1+ (t_{1,bq_1(\tau)}^*)')(\xi_1-\xi_2) + (1-b) (q_0-\xi_1)e^{-\theta \tau} + (c-\xi_2)(1+ (t_{1,bq_1(\tau)}^*)') e^{-\mu(T-\tau - t_{1,bq_1(\tau)}^*)}
\end{align*}
where
\begin{align*}
    (t_{1,bq_1(\tau)}^*)' &= \frac{b\theta (\xi_1-q_0)e^{-\theta \tau}}{\theta (b \xi_1 + b(q_0-\xi_1)e^{-\theta \tau})-\lambda+\mu c -\theta c}\\
    &= \frac{b (\xi_1-q_0)e^{-\theta \tau}}{bq_1(\tau)-\xi_1}
\end{align*}
from which we can observe that
$$1+(t_{1,bq_1(\tau)}^*)' = \frac{(b-1)\xi_1}{bq_1(\tau)-\xi_1} < 0$$

Notice that $(1-b)(c-\xi_1) < 0$ and $(q-b)\xi_1 > 0$, so $\frac{d}{d\tau} T \cdot J(\tau) = 0$ has a solution:
\begin{align*}
    -(c-\xi_1) e^{-\theta(\tau-t_{2,bq_2(\tau)}^{*})} =\xi_1 e^{-\theta(T-\tau)}\\
    \ln(\xi_1-c) -\theta\tau + \theta t_{2,bq_2(\tau)}^{*} = \ln(\xi_1) -\theta T+\theta\tau\\
    \tau^{OPT} = \frac{T+t_{2,bq_2(\tau)}^{*}}{2} + \frac{\ln\left(\frac{\xi_1-c}{\xi_1}\right)}{2\theta}
\end{align*}
From the above one can notice that $(1-b) (q_0-\xi_1) > 0$ and $(c-\xi_2)(1+ (t_{1,bq_1(\tau)}^*)') <0$. So the function $\frac{d }{d \tau} T\cdot J(\tau)$ decreases as $\tau$ increases. In addition, it is easy to verify that $\frac{d }{d \tau} T\cdot J \left( \frac{\ln\left( \frac{q_0-\xi_1}{c/b-\xi_1}\right)}{\theta}\right)$ is positive, it follows that $\frac{d }{d \tau} T\cdot J(\tau)$ is positive for all $\tau \in \left[0,\frac{\ln\left( \frac{q_0-\xi_1}{c/b-\xi_1}\right)}{\theta}\right]$ and hence $\frac{1}{T}J(\tau)$ increases monotonically on the interval $\left[0,\frac{\ln\left( \frac{q_0-\xi_1}{c/b-\xi_1}\right)}{\theta}\right]$, and
\begin{align*}
    \tau_L^* = 0, \quad \quad \tau_U^* = \frac{\ln\left( \frac{q_0-\xi_1}{c/b-\xi_1}\right)}{\theta}.
\end{align*}

\subsubsection{$\tau \in I_{3,2} = \left[\frac{\ln\left( \frac{q_0-\xi_1}{c/b-\xi_1}\right)}{\theta}, t_{1,q_0}^*\right]$ with $T \geq \frac{\ln\left( \frac{q_0-\xi_1}{c/b-\xi_1}\right)}{\theta}$}
Once again, we write down the solution to Equation \ref{abeqn1}:
\begin{align*}
    q_1(t) &= \xi_1 + (q_0 - \xi_1) e^{-\theta t}  &\text{ for } 0 \leq t \leq \tau \\
    q_2(t) &= \xi_2 + (bq_1(\tau)-\xi_2)e^{- \mu(t-\tau)} &\text{ for } \tau < t \leq T\\
\end{align*}
The total queue length corresponding to the three parts are:
\begin{align}
    \int_0^\tau q_1(t) dt &= \xi_1 \tau + \frac{q_0-\xi_1}{\theta}(1-e^{-\theta\tau}) \nonumber\\
    \int_\tau^T q_2(t) dt &= \xi_2 (T-\tau) +  \frac{b(\xi_1-\xi_2)}{\mu}(1-e^{-\mu (T-\tau)})+\frac{b(q_0-\xi_1)}{\mu} (e^{-\theta \tau }- e^{-\mu T +\mu \tau -\theta \tau}) \nonumber 
\end{align}

Hence, taking derivative gives:
\begin{align*}
    \frac{d}{d\tau} \int_0^\tau q_1(t) dt &= \xi_1 + (q_0 - \xi_1) e^{-\theta \tau} \\
    \frac{d}{d\tau} \int_\tau^T q_2(t) dt &= -\xi_2 - b (\xi_1-\xi_2) e^{-\theta(T-\tau)} - \frac{b\theta(q_0-\xi_1)}{\mu}e^{-\theta \tau}-\frac{b(q_0-\xi_1)(\mu-\theta)}{\theta}e^{-\mu T +\mu \tau -\theta \tau} \\
    \frac{d}{d\tau} T \cdot J(\tau) &=(\xi_1-\xi_2)  - b (\xi_1-\xi_2) e^{-\theta(T-\tau)}-\frac{b(q_0-\xi_1)(\mu-\theta)}{\theta}e^{-\mu T +\mu \tau -\theta \tau} + (1-\frac{b\theta}{\mu})(q_0-\xi_1)e^{-\theta \tau}
\end{align*}
which again, lacks a general closed-form solution and one needs to appeal to numerical ones.

\subsubsection{$\tau \in I_{3,3} = [t_{1,q_0}^*,T]$ with $T \geq \frac{\ln\left( \frac{q_0-\xi_1}{c/b-\xi_1}\right)}{\theta}$}
The solution to Equation \ref{abeqn1} in this case is:
\begin{align*}
    q_1(t)& = \xi_1 + (q_0 - \xi_1) e^{-\theta t}&\text{ for } 0 \leq t < t_{1,q_0}^*\\
    q_2(t)& = \xi_2 + (c - \xi_2) e^{-\mu (t-t_{1,q_0}^*)} &\text{ for } t_{1,q_0}^* \leq t < \tau\\
    q_3(t)& =  \xi_2 + \left( bq_2(\tau)- \xi_2 \right) e^{-\mu (t - \tau)} &\text{ for } \tau\leq t \leq T
\end{align*}

Taking integral to compute the four components of $T\cdot J(\tau)$:
\begin{align*}
    \int_0^{t_{1,q_0}^*} q_1(t) dt &= \xi_1t_{1,q_0}^* + \frac{q_0 - \xi_1}{\theta}(1 - e^{-\theta t})\\
    \int_{t_{1,q_0}^*}^{\tau } q_2(t) dt &= \xi_2 (\tau - t_{1,q_0}^*) + \frac{c - \xi_2}{\mu} \left( 1 - e^{-\mu (\tau-t_{1,q_0}^*)} \right)\\
    \int_{\tau }^T q_3(t) dt &= \xi_2(T - \tau) + \frac{(b-1)\xi_2}{\mu} (1- e^{-\mu(T - \tau)}) + \frac{b(c-\xi_2)}{\mu}(e^{-\mu(\tau-t_{1,q_0}^*)}-e^{-\mu(T-t_{1,q_0}^*)}).
\end{align*}

Taking derivatives gives:
\begin{align*}
\frac{d }{d \tau}\int_0^{t_{1,q_0}^*} q_1(t) dt &=  0\\
\frac{d }{d \tau}\int_{t_{1,q_0}^*}^{\tau } q_2(t) dt &= \xi_2+(c-\xi_2)e^{-\mu (\tau-t_{1,q_0}^*)}\\
\frac{d }{d \tau} \int_{\tau }^T q_3(t) dt &= -\xi_2 -(b-1)\xi_2e^{-\mu(T - \tau)} - b(c-\xi_2)e^{-\mu(\tau-t_{1,q_0}^*)}\\
\frac{d }{d \tau} T\cdot J(\tau) &= (1-b)(c-\xi_2)e^{-\mu(\tau-t_{1,q_0}^*)} +(1-b)\xi_2e^{-\mu(T - \tau)}
\end{align*}
By the assumption of this section, we can conclude that since $(1-b)(c-\xi_2)>0$, $(1-b)\xi_2>0$, $\frac{d }{d \tau} T\cdot J(\tau) >0$ for all $\tau \in [t_{1,q_0}^*,T]$ and thus $\frac{1}{T}T(\tau)$ monotonically increase over the interval $[t_{1,q_0}^*,T]$. Hence, the minimizer on this interval is:
$$\tau_L^* = t_{1,q_0}^*.$$
The argument for $\tau_U^*=T$ follows from the previous sections.

\subsubsection{$\tau \in I_{3,4} = [0,T]$ with $T \leq \frac{\ln\left( \frac{q_0-\xi_1}{c/b-\xi_1}\right)}{\theta}$}

In this case, we again first write down the solution to Equation \ref{abeqn1}:
\begin{align*}
    q_1(t) &= \xi_1 + (q_0 - \xi_1) e^{-\theta t}  &\text{ for } 0 \leq t \leq \tau \\
    q_2(t) &= \xi_1 + (b-1) \xi_1 e^{-\theta(t-\tau)}+b(q_0-\xi_1)e^{-\theta t} &\text{ for } \tau < t \leq T\\
\end{align*}
The total queue length corresponding to the three parts are:
\begin{align}
    \int_0^\tau q_1(t) dt &= \xi_1 \tau + \frac{q_0-\xi_1}{\theta}(1-e^{-\theta\tau}) \nonumber\\
    \int_\tau^T q_2(t) dt &= \xi_1 (T-\tau) +  \frac{bq_1(\tau)-\xi_1}{\theta}(1-e^{-\theta (T-\tau)}) \nonumber 
\end{align}

Hence, taking derivative gives:
\begin{align*}
    \frac{d}{d\tau} \int_0^\tau q_1(t) dt &= \xi_1 + (q_0 - \xi_1) e^{-\theta \tau} \\
    \frac{d}{d\tau} \int_\tau^T q_2(t) dt &= -\xi_1 - (b-1) \xi_1 e^{-\theta(T-\tau)} - b(q_0-\xi_1)e^{-\theta \tau}\\
    \frac{d}{d\tau} T \cdot J(\tau) &= (1-b)(q_0-\xi_1)e^{-\theta \tau}-(b-1)\xi_1 e^{-\theta(T-\tau)}
\end{align*}
Observe that $ (1-b)(q_0-\xi_1)>0$ and $-(b-1)\xi_1>0$. Thus $ J(\tau)$ is monotonically increasing, thus, $\tau_L^* = 0,\tau_U^* = T$.

\subsection{$q_0 \leq c, \lambda \leq \mu c$}

Since there is only one sub-case, we write down the solution for the differential equation \ref{abeqn1} for $\tau \in \{0,T\}$:

\begin{align*}
    q_1(t) &= \xi_2 + (q_0 - \xi_2) e^{-\mu t}  &\text{ for } 0 \leq t \leq \tau \\
    q_2(t) &= \xi_2 + \bigg( b\xi_2 + b(q_0-\xi_2)e^{-\mu \tau} - \xi_2 \bigg) e^{-\mu(t-\tau)} \\
    &= \xi_2 + (b-1) \xi_2 \cdot e^{-\mu(t-\tau)} + b(q_0-\xi_2)e^{-\mu t}  &\text{ for } \tau < t \leq T
\end{align*}

Similar to the previous section we discuss two cases:

(i) When $q_0 > \xi_2$. Under this case, we
can derive the derivative of the total queue length $\frac{d}{d\tau} T\cdot J(\tau)$ to be
$$\frac{d}{d\tau} \int_0^{\tau} q_1(t) dt +\frac{d}{d\tau} \int_{\tau}^T q_2(t, \tau) dt = (1-b)(q_0-\xi_2)e^{-\mu \tau} - (b-1) \xi_2  e^{-\mu (T-\tau)},$$
is strictly positive, making 
$$\tau_L^* = \underset{\tau}{\mathrm{argmin}}_{\tau} \int_0^t q(t) dt = 0,$$
$$\tau_U^* =\underset{\tau}{\mathrm{argmax}}_{\tau} \int_0^t q(t) dt = T.$$

(ii) When $q_0 \leq \xi_2$, set derivative to 0:
\begin{align*}
    \tau^{OPT} = \frac{\ln(\xi_2-q_0)-\ln(\xi_2)}{2 \mu}+ \frac{t}{2},
\end{align*}
Simple convexity check confirms that:
$$\tau_L^* = \underset{\tau}{\mathrm{argmin}} \int_0^t q(t) dt = \tau^{OPT},$$
$$\tau_U^* =  \underset{\tau}{\mathrm{argmax}} \int_0^t q(t) dt = T.$$

\bibliographystyle{plainnat}
\bibliography{refs}

\end{document}